\newcommand{\Reals}{\mathbb{R}}
\newcommand{\seg}{\overline}
\newcommand{\arc}{\wideparen}
\newcommand{\diam}{\mathrm{diam}}
\newcommand{\umbra}{U}
\newcommand{\deep}{\hat{U}}
\newcommand{\dists}{d_{s}}
\newcommand{\ds}{\delta^{\ast}}
\newcommand{\dht}{\hat{\delta}}
\newcommand{\dsk}{\delta^{\ast}_{k}}
\newcommand{\dskii}{\delta^{\ast}_{2}}
\newcommand{\dsiii}{\delta^{\ast}_{3}}
\newcommand{\dsvi}{\delta^{\ast}_{6}}
\newcommand{\as}{a^{\ast}}
\newcommand{\aht}{\hat{a}}
\newcommand{\ask}{a^{\ast}_{k}}
\newcommand{\asii}{a^{\ast}_{2}}
\newcommand{\asvi}{a^{\ast}_{6}}
\newcommand{\eps}{\varepsilon}
\newcommand{\strip}{\mathfrak{S}}
\newcommand{\reg}{\mathfrak{R}}
\newcommand{\midline}{\mathfrak{M}}
\newcommand{\boundary}{\mathfrak{B}}
\newcommand{\aii}{\nicefrac{\alpha}{2}}
\newcommand{\xii}{\nicefrac{\xi}{2}}
\newcommand{\dsii}{\nicefrac{\ds}{2}}
\newcommand{\pii}{\nicefrac{\pi}{2}}
\newcommand{\muk}{\mu_k}
\newcommand{\muii}{\mu_2}
\newcommand{\muiii}{\mu_3}
\newcommand{\muvi}{\mu_6}
\newcommand{\longk}{\lambda_k}
\newcommand{\shortk}{\sigma_k}
\newcommand{\longvi}{\lambda_6}
\newcommand{\shortvi}{\sigma_6}
\newtheorem{theorem}{Theorem}
\newtheorem{lemma}[theorem]{Lemma}
\newtheorem{corollary}[theorem]{Corollary}
\theoremstyle{plain}
\newtheorem{observation}[theorem]{Observation}
\let\geq\geqslant
\let\leq\leqslant
\title{Shortcuts for the Circle%
  \thanks{SWB is supported by Basic Science Research
    Program through the National Research Foundation of Korea (NRF)
    funded by the Ministry of Education (2015R1D1A1A01057220).  MdB is
    supported by the Netherlands’ Organisation for Scientific Research
    (NWO) under project no.~024.002.003.  OC~is supported by NRF grant
    2011-0030044 (SRC-GAIA) funded by the government of Korea. JG is
    supported under Australian Research Council's Discovery Projects
    funding scheme (project number DP150101134).}}
\author{Sang Won Bae%
  \thanks{Kyonggi University, Korea}
  \and
  Mark de Berg%
  \thanks{TU Eindhoven, The Netherlands}
  \and
  Otfried Cheong%
  \thanks{KAIST, Korea}
  \and
  Joachim Gudmundsson%
  \thanks{University of Sydney, Australia}
  \and
  Christos Levcopoulos%
  \thanks{Lund University, Sweden}}
\renewcommand{\showkeyslabelformat}[1]{\normalfont\tiny\ttfamily#1}
\def\section{\@startsection {section}{1}{\z@}{-3.5ex plus -1ex minus
    -.2ex}{2.3ex plus .2ex}{\large\bf}}
\def\subsection{\@startsection{subsection}{2}{\z@}{-3.25ex plus -1ex
    minus -.2ex}{1.5ex plus .2ex}{\normalsize\bf}}
\def\@fnsymbol#1{\ensuremath{\ifcase#1\or *\or 1\or 2\or 3\or 4\or
    5\or 6\or 7 \or 8\ or 9 \or 10\or 11 \else\@ctrerr\fi}}
\begin{document}

\maketitle

\begin{abstract}
  Let $C$ be the unit circle in $\Reals^2$. We can view $C$ as a plane
  graph whose vertices are all the points on $C$, and the distance
  between any two points on $C$ is the length of the smaller arc
  between them. We consider a graph augmentation problem on $C$, where
  we want to place $k\geq 1$ \emph{shortcuts} on $C$ such that the
  diameter of the resulting graph is minimized.

  We analyze for each $k$ with $1\leq k\leq 7$ what the optimal set of
  shortcuts is. Interestingly, the minimum diameter one can obtain is
  not a strictly decreasing function of~$k$. For example, with seven
  shortcuts one cannot obtain a smaller diameter than with six
  shortcuts. Finally, we prove that the optimal diameter is $2 +
  \Theta(1/k^{\nicefrac{2}{3}})$ for any~$k$.
\end{abstract}

\section{Introduction}
\label{sec:intro}

Graph augmentation problems have received considerable attention over
the years. The goal in such problems is typically to add extra edges
to a given graph~$G$ in order to improve some quality measure. One
natural quality measure is the (vertex- or edge-)connectivity
of~$G$. This has led to work where one tries to find the minimum
number of edges that can be added to the graph to ensure it is
$k$-connected, for a desired value of~$k$. Another natural measure is
the diameter of $G$, that is, the maximum distance between any pair of
vertices. The goal then becomes to reduce the diameter as much as
possible by adding a given number of edges, or to achieve a given
diameter with a small number of extra edges; see for example the
papers by Erd\"{o}s, R\'{e}nyi, and
S\'{o}s~\cite{er-optg-62,ers-optg-66}.

Chung and Garey~\cite{cg-dbag-84} studied this problem for the special
case where the original graph is the $n$-vertex cycle.  They showed
that if $k$ edges are added, then the diameter of the resulting graph
is at least $\frac{n}{k+2}-3$ for even~$k$ and $\frac{n}{k+1}-3$ for
odd~$k$, and that there is a way to add~$k$ edges so that the
resulting graph has diameter at most $\frac{n}{k+2}-1$ for even~$k$
and $\frac{n}{k+1}-1$ for odd~$k$.
(For paths, slightly better bounds are known~\cite{sbl-died-87}.)

The algorithmic problem of finding a set of $k\geq 1$ edges that
minimizes the diameter of the augmented graph was first asked by
Chung~\cite{c-dgopn-87} in 1987. Since then many papers have
considered the problem for general graphs,
see~\cite{bgp-ianrg-12,fggm-agmd-15,ks-bdmcg-07,lms-mcbdb-92,sbl-died-87}.
Gro{\ss}e et al.~\cite{grosse} were the first to consider the diameter
minimization problem in the geometric setting where the graph is
embedded in the Euclidean plane. They presented an $O(n \log^3 n)$
time algorithm that determined the optimal shortcut that minimizes the
diameter of a polygonal path with $n$ vertices. The running time was
later improved to $O(n \log n)$ by Wang~\cite{w-iadoap-16}.

In the above papers only the discrete setting is considered, that is,
shortcuts connect two vertices and the diameter is measured between
vertices. In the continuous setting all points along the edges of the
network are taken into account when placing a shortcut and when
measuring distances in the augmented network.  In the continuous
setting, Yang~\cite{yang} studied the special case of adding a single
shortcut to a polygonal path and gave several approximation algorithms
for the problem. De Carufel et al.~\cite{decarufel} considered the
problem for paths and cycles. For paths they showed that an optimal
shortcut can be determined in linear time. For cycles they showed that
a single shortcut can never decrease the diameter, while two shortcuts
always suffice. They also proved that for convex cycles the optimal
pair of shortcuts can be computed in linear time. Recently,
C\'{a}ceres et al.~\cite{caceres} gave a polynomial time algorithm
that can determine whether a plane geometric network admits a
reduction of the continuous diameter by adding a single shortcut.

We are interested in a geometric continuous variant of this problem.
Let~$C$ be a unit circle in the plane. We define the
\emph{distance}~$d(p,q)$ between two points $p,q\in C$ to be the
length of the smaller arc along~$C$ that connects $p$ to $q$. Thus the
diameter of $C$ in this metric is $\pi$. We now want to add a number
of \emph{shortcuts}---a shortcut is a chord of $C$---to improve the
diameter. Here the distance $d_S(p,q)$ between $p$ and $q$ for a given
collection~$S$ of shortcuts is defined as the length of the shortest
path between $p$ and $q$ that can travel along $C$ and along the
shortcuts where, if two shortcuts intersect in their interior, we do
not allow the path to switch from one shortcut to the other at the
intersection point. In other words, if the path uses a shortcut, it
has to traverse it completely. Note that if we view the circle $C$ as
a graph with infinitely many vertices (namely all points on $C$) where
the graph distance is the distance along~$C$, then adding shortcuts
corresponds to adding edges to the graph. For a set $S$ of shortcuts,
define $\diam(S) := \max_{p,q\in C} d_S(p,q)$ to be the diameter of
the resulting “graph.” We are interested in the following question:
given $k$, the number of shortcuts we are allowed to add, what is the
best diameter we can achieve? In other words, we are interested in the
quantity $\diam(k) := \inf_{|S|=k} \diam(S)$.

It is obvious that 
$\pi = \diam(0) \geq \diam(1) \geq \cdots \geq \diam(k) \geq \cdots
\geq \lim_{k\rightarrow \infty} \diam(k) = 2$. \smallbreak

Our main results are as follows.
\begin{itemize}
\item For $1\leq k\leq 7$, we determine $\diam(k)$ exactly. Our
  results show that $\diam(k)$ is not strictly decreasing as a
  function of $k$. This not only holds at the very beginning---it is
  easy to see that $\diam(1)=\diam(0)$---but, interestingly also for
  certain larger values of $k$. In particular, we show that
  $\diam(7)=\diam(6)$.
\item We have $\diam(8) < \diam(7)$.
\item We show that $\diam(k) = 2 + \Theta(1/k^{\frac{2}{3}})$.
\end{itemize}

We rely on a number of numerical calculations.  A Python script that
performs these calculations can be found at
\url{http://github.com/otfried/circle-shortcuts}, and
the output of the
script is included as Appendix~\ref{sec:calculations}.


\section{The umbra and the region of a shortcut}

A shortcut~$s$ is a chord of~$C$.  A shortcut of length $a = |s| \in
[0, 2]$ spans an angle of~$\alpha(a) \in [0,\pi]$, where $\alpha(a) :=
2\arcsin\big(\frac{a}{2}\big)$.  The following function $\delta: [0,2]
\mapsto [0,\pii-1]$ will play a key role in our arguments:
\begin{align*}
\delta(a) & :=\frac{\alpha(a) - a}{2} = \arcsin\left(\frac{a}{2}\right) -
\frac{a}{2}.
\end{align*}
Note that both $\alpha(a)$ and $\delta(a)$ are increasing and convex
functions, and $\alpha(a) = a + 2\delta(a)$. See
\figurename~\ref{fig:notations}. To simplify the notation, we will
allow shortcuts themselves as the function argument, with the
understanding that $\alpha(s) = \alpha(|s|)$ and $\delta(s) =
\delta(|s|)$.

We parameterize the points on the circle~$C$ using their polar angle
in~$[0, 2\pi)$.  For a shortcut~$s$ with endpoints~$u$ and~$v$ we will
  write $s = uv$ if the \emph{counter-clockwise} arc~$\arc{uv}$ is the
  \emph{shorter} arc of~$C$ connecting~$u$ and~$v$.  Only for $|s| =
  2$, we have $s = uv = vu$; in this case~$u$ and~$v$ are
  \emph{antipodal points}, that is $v = u + \pi$.

The \emph{inner umbra} of a shortcut~$s = uv$ is the
arc~$\arc{u_1v_1}$ where $u_1 = u + \delta(s)$ and $v_1 = v -
\delta(s)$. The \emph{outer umbra} is the set of antipodal points of
the inner umbra, that is the arc~$\arc{u'_1v'_1}$ where $x' = x +
\pi$.  Together they form the \emph{umbra}~$U(s)$ of~$s$.  Since
$\alpha(s) = |s| + 2\delta(s)$, the inner and outer umbra have
length~$|s|$.  The \emph{radiance} of~$s$ consists of the two
arcs~$\arc{vu'}$ and~$\arc{v'u}$.  For $|s| = 2$, we cannot
distinguish inner and outer umbra, and the radiance consists of two
isolated points, see
\figurename~\ref{fig:notations}.
\begin{figure}[thb]
  \centerline{\includegraphics{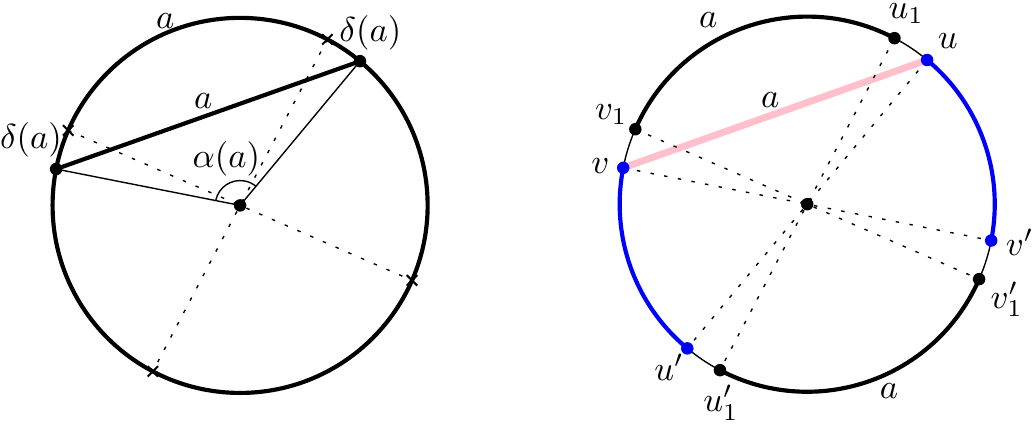}}
  \caption{For a shortcut $s$ of length $a = |s|$, (left)
    $\alpha(a)=a+2\delta(a)$ and (right) the umbra $\umbra(s)$
    (consisting of two arcs of length $a$ in thick black) and radiance
    (in thick blue).}
  \label{fig:notations}
\end{figure}

Let $p \in U(s)$.  Then a path going from~$p$ to one endpoint of~$s$
and traversing the shortcut is at least as long as going directly
from~$p$ to the other endpoint---so the shortcut is not useful.  This
gives us the following observation:
\begin{observation}
  \label{obs:umbra}
  Given a set $S$ of shortcuts, if the shortest path~$\gamma$ from~$p$
  to~$q$ uses shortcuts~$s_1, s_2, \dots, s_m \in S$ in this order,
  then $p \not\in U(s_1)$ and $q \not\in U(s_m)$.
  If $\gamma$ traverses~$s_{i}$ from its endpoint~$u_{i}$ to its other
  endpoint~$v_{i}$, then $v_{i} \notin U(s_{i+1})$ and
  $u_{i+1} \notin U(s_i)$ for $i = 1, \dots, m-1$.
\end{observation}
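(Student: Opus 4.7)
My plan is to derive each of the four conditions on $\gamma$ from the informal umbra principle stated just before the observation: for $p \in U(s)$, a path that first reaches one endpoint of $s$ along $C$ and then crosses $s$ is no shorter than the arc from $p$ directly to the opposite endpoint of $s$. First I would give this principle a quick formal check. Parameterise $s = uv$ so that $u$ is at angle $0$ and $v$ at angle $\alpha(s) \leq \pi$; then a point $p$ in the inner umbra sits at some angle $\theta \in [\delta(s), \alpha(s) - \delta(s)]$. The inequality $|s| + \theta \geq \alpha(s) - \theta$ reduces, via $\alpha(s) = |s| + 2\delta(s)$, to $\theta \geq \delta(s)$, which holds. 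Since $\alpha(s) \leq \pi$ the arcs in play are the shorter ones and coincide with the true $C$-distances, and the outer umbra case follows by applying the antipodal map.

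Next, let $\gamma$ be a shortest $p$-to-$q$ path that, among all shortest paths, uses the fewest shortcuts. I would then prove each of the four claims by contradiction using a local substitution. If $p \in U(s_1)$, the prefix of $\gamma$ running from $p$ along $C$ to $u_1$ and then across $s_1$ to $v_1$ has length at least the arc from $p$ directly to $v_1$, so substituting this arc yields a path of equal or smaller length that avoids $s_1$ entirely, contradicting the minimality of the shortcut count. The case $q \notin U(s_m)$ is identical on the reversal of $\gamma$. For the interior conditions: if $v_i \in U(s_{i+1})$, the sub-path from $v_i$ along $C$ to $u_{i+1}$ followed by the traversal of $s_{i+1}$ to $v_{i+1}$ is at least as long as the direct arc from $v_i$ to $v_{i+1}$, and substituting removes $s_{i+1}$ without lengthening $\gamma$; the symmetric claim $u_{i+1} \notin U(s_i)$ is obtained from the reversed-path version of the same argument.

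The only subtlety is the boundary case where the offending point lies exactly on an endpoint of an umbra arc: there the umbra inequality is tight and the replacement has exactly the same length as the original. This is precisely why I extract the contradiction from minimising the shortcut count in addition to the length of $\gamma$, rather than from strict inequality. Beyond this bookkeeping point, every step is elementary arc-length arithmetic plus the umbra principle, so I do not anticipate any serious obstacle.
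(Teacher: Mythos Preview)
Your proposal is correct and follows essentially the same approach as the paper: the paper states the umbra principle in the sentence immediately preceding the observation and then records the observation without further argument, adding only the parenthetical remark that shortest paths are assumed to use the minimum number of shortcuts. You simply make explicit the local-substitution reasoning and the tie-breaking via minimum shortcut count that the paper leaves implicit; the one small wrinkle is that your ``antipodal map'' justification for the outer umbra should note that the map swaps the roles of the two endpoints of $s$, but the conclusion is unchanged.
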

(For the boundary cases, we will assume that a shortest path uses the
minimum number of shortcuts possible.)  An immediate implication is
that one shortcut alone cannot help to improve the diameter, that is,
$\diam(1) = \diam(0) = \pi$.

Another useful observation is the following (remember that $d(p,q) =
\min(|\arc{pq}|, |\arc{qp}|)$ is the distance along~$C$ without
shortcuts):
\begin{observation}
  \label{obs:deltas}
  Given a set $S$ of shortcuts,
  if the shortest path from~$p$ to~$q$ uses the set of
  shortcuts~$\{s_1, s_2, \dots, s_m\}\subseteq S$,
  then $d_{S}(p,q) \geq d(p,q) - 2\sum_{i=1}^{m}\delta(s_{i})$.
\end{observation}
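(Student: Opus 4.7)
The plan is to transform the shortest path into a walk entirely on $C$ by ``paying'' the amount $2\delta(s_i)$ for each shortcut used, and then invoke the trivial fact that any walk on $C$ from $p$ to $q$ has length at least $d(p,q)$.

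Let $\gamma$ be a shortest path from $p$ to $q$, and suppose it enters each shortcut $s_i$ at endpoint $u_i$ and exits at $v_i$. Writing $\gamma$ as a concatenation of arcs along $C$ and chord traversals gives
\[
d_S(p,q) \;=\; |\arc{pu_1}| \;+\; \sum_{i=1}^{m-1} |\arc{v_i u_{i+1}}| \;+\; |\arc{v_m q}| \;+\; \sum_{i=1}^{m} |s_i|.
\]
Now I would replace each chord $s_i$ by the shorter arc $\arc{u_iv_i}$ on $C$, whose length is $\alpha(s_i) = |s_i| + 2\delta(s_i)$. The result is a continuous walk $\gamma'$ along $C$ from $p$ to $q$ whose length is exactly $d_S(p,q) + 2\sum_{i=1}^{m} \delta(s_i)$.

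It remains to observe that any walk on $C$ from $p$ to $q$ has length at least $d(p,q)$. This is immediate by lifting to the universal cover $\Reals$: the lifted path has length at least the absolute value of its net displacement, and the net displacement can be any representative of $q - p \pmod{2\pi}$, whose absolute value is at least $d(p,q)$. Applied to $\gamma'$ this yields $d_S(p,q) + 2\sum_i \delta(s_i) \geq d(p,q)$, which is the claim. The degenerate case $|s_i| = 2$ (antipodal endpoints) causes no trouble, since both arcs between $u_i$ and $v_i$ then have length $\pi = \alpha(s_i)$ and either may be used as the replacement. Beyond this observation, the whole argument is bookkeeping built on the identity $\alpha(s) = |s| + 2\delta(s)$, so there is no real obstacle.
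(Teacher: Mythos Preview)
Your proposal is correct and follows exactly the same idea as the paper: replace each shortcut $s_i$ by its spanning arc of length $\alpha(s_i) = |s_i| + 2\delta(s_i)$ to obtain a walk on~$C$ from~$p$ to~$q$, which must have length at least $d(p,q)$. The paper states this in a single sentence; you have simply spelled out the bookkeeping and the universal-cover justification for the last step.
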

Indeed, if $\gamma$ is the shortest path, we can replace each shortcut
$s_{i}$ by walking along the circle instead, increasing the path
length by exactly~$2\delta(s_{i})$.

Observation~\ref{obs:umbra} does not exclude the possibility that the
shortest path for $p, q \in U(s)$ uses $s$ as an intermediate shortcut
(not the first or last one). We therefore define the \emph{deep
  umbra}~$\deep(s)$ of~$s$ as the set of points~$p$ in the inner umbra
of~$s$ such that~$|\seg{pu}| + |s| \geq d(p,v)$, where $u$ is the
endpoint of~$s$ closer to~$p$ and $v$ is the other endpoint.
\begin{observation}
  \label{obs:deep}
  If $p\in \deep(s)$ or $q\in\deep(s)$ then the shortest path from~$p$
  to~$q$ does not use shortcut~$s$.
\end{observation}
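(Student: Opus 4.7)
The plan is to argue by contradiction: assume $\gamma$ is a shortest $p$-to-$q$ path that uses $s$, and construct an alternative path of length at most $|\gamma|$ that uses strictly fewer shortcuts. Under the tie-breaking convention adopted just after Observation~\ref{obs:umbra}, this contradicts the minimality of $\gamma$ and establishes the claim. Reversing $\gamma$ if necessary, it suffices to treat the case $p\in\deep(s)$, since the shortcut $s$ is symmetric in its endpoints and the condition defining $\deep(s)$ makes no reference to the path's direction.

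Let $a,b$ be the endpoints of $s$, and consider the first traversal of $s$ by $\gamma$; say $\gamma$ enters $s$ at $a$ and exits at $b$. Denote by $\gamma_1$ the subpath from $p$ to $a$. The key lower bound is geometric: every arc of $C$ and every chord in $S$ lies inside the closed unit disk, so $\gamma_1$ is a rectifiable planar curve from $p$ to $a$ and therefore $|\gamma_1|\geq|\seg{pa}|$. Consequently the prefix of $\gamma$ up to the exit at $b$ has length at least $|\seg{pa}|+|s|$.

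Now let $u$ be the endpoint of $s$ closer to $p$ along $C$ and $v$ the other endpoint; since chord length is a monotone function of arc length on $[0,\pi]$, we automatically have $|\seg{pu}|\leq|\seg{pv}|$ and $d(p,u)\leq d(p,v)$. The defining inequality of $\deep(s)$ reads $|\seg{pu}|+|s|\geq d(p,v)$. If $a=u$, the prefix has length at least $|\seg{pu}|+|s|\geq d(p,v)=d(p,b)$; if instead $a=v$, the prefix has length at least $|\seg{pv}|+|s|\geq|\seg{pu}|+|s|\geq d(p,v)\geq d(p,u)=d(p,b)$. Either way, the prefix can be replaced by the shorter arc of $C$ from $p$ to $b$, yielding a new $p$-to-$q$ path whose length does not exceed $|\gamma|$ and which contains one fewer traversal of $s$ (and possibly no uses of other shortcuts that appeared in the discarded prefix).

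The only real obstacle is bookkeeping in the case analysis of which endpoint $\gamma$ enters first; the monotonicity of chord length in arc length collapses the unfavorable case into the favorable one via a short chain of inequalities, and no induction on the number of shortcut uses is needed because a single application to the first traversal of $s$ already violates the tie-breaking rule. This completes the proposed proof of Observation~\ref{obs:deep}.
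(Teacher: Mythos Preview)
Your argument is correct and is exactly the reasoning the paper has in mind; the paper states Observation~\ref{obs:deep} without proof, treating it as immediate from the definition of~$\deep(s)$, and your write-up simply makes that reasoning explicit, including the small case split on which endpoint of~$s$ is entered first.
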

The following lemma shows that the deep umbra covers nearly the entire
inner umbra. 
\begin{lemma}
  \label{lem:deep}
  The deep umbra~$\deep(s)$ of a shortcut~$s$ is non-empty and has
  length at least $|s| - 4\delta(\delta(s)) > |s| - 0.02$.
\end{lemma}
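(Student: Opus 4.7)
The plan is to parameterize the inner umbra and solve explicitly for the boundary of the deep umbra. Write $s = uv$ with $a := |s|$, so the inner umbra is the arc from $u_1 = u + \delta(a)$ to $v_1 = v - \delta(a)$, of length $a$. A point of the inner umbra is $p = u + \theta$ with $\theta \in [\delta(a), a + \delta(a)]$. Since the configuration is symmetric about the midpoint $\theta = \delta(a) + a/2$, it suffices to analyze the half on which $u$ is the nearer endpoint, namely $\theta \in [\delta(a), \delta(a) + a/2]$.

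For such $p$, the chord length is $|\seg{pu}| = 2\sin(\theta/2)$ and the arc distance is $d(p,v) = \alpha(a) - \theta$. Substituting into $|\seg{pu}| + a \geq d(p,v)$ and using $\alpha(a) = a + 2\delta(a)$, the condition for $p \in \deep(s)$ reduces to
\[
2\sin(\theta/2) + \theta \;\geq\; 2\delta(a).
\]
The left-hand side is strictly increasing in $\theta$, so there is a unique threshold $\theta^{\ast} \geq \delta(a)$ at which equality holds; the $u$-side of $\deep(s)$ begins at $u + \theta^{\ast}$, and the ``loss'' on this side is $\theta^{\ast} - \delta(a)$.

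The crucial step is a short algebraic identity. Let $c^{\ast} := 2\sin(\theta^{\ast}/2)$, so by definition $c^{\ast} + \theta^{\ast} = 2\delta(a)$. Applying $\alpha(c) = c + 2\delta(c)$ at $c = c^{\ast}$ and using $\alpha(c^{\ast}) = \theta^{\ast}$ gives $\theta^{\ast} = c^{\ast} + 2\delta(c^{\ast})$. Eliminating $c^{\ast}$ between these two equations yields
\[
\theta^{\ast} - \delta(a) \;=\; \delta(c^{\ast}).
\]
Since $c^{\ast} + \theta^{\ast} = 2\delta(a)$ and $\theta^{\ast} \geq \delta(a)$, we have $c^{\ast} \leq \delta(a)$, so monotonicity of $\delta$ gives $\theta^{\ast} - \delta(a) \leq \delta(\delta(a))$.

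By the symmetric argument on the $v$-side, $\deep(s)$ is an arc of length at least $a - 2\delta(\delta(a))$, which is strictly stronger than and hence implies the stated bound $|s| - 4\delta(\delta(s))$. Non-emptiness follows because $2\delta(\delta(a)) < a$ for every $a \in (0, 2]$, as $\delta$ decays cubically near the origin. For the numerical claim $4\delta(\delta(s)) < 0.02$, monotonicity of $\delta$ reduces everything to $a = 2$, where $\delta(2) = \pi/2 - 1$ and a direct check yields $\delta(\pi/2 - 1) < 0.005$. The only real subtlety is spotting the identity $\theta^{\ast} - \delta(a) = \delta(c^{\ast})$; without it, crude Taylor-style bounds would likely produce precisely the weaker factor $4$ appearing in the statement.
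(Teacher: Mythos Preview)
Your argument is correct and in fact proves a sharper bound than the one stated in the lemma: you obtain $|\deep(s)| \geq |s| - 2\delta(\delta(s))$ rather than $|s| - 4\delta(\delta(s))$. The identity $\theta^{\ast} - \delta(a) = \delta(c^{\ast})$ is genuine and is the key step; combined with $c^{\ast} \leq \delta(a)$ (which follows since $\theta^{\ast} = c^{\ast} + 2\delta(c^{\ast}) \geq c^{\ast}$ and $\theta^{\ast} + c^{\ast} = 2\delta(a)$), it pins down the loss per side exactly.

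The paper takes a more direct but coarser route. Rather than solving for the boundary of $\deep(s)$, it simply exhibits a specific sub-arc $\arc{u_2v_2}$ with $u_2 = u + \alpha(\delta(s))$, $v_2 = v - \alpha(\delta(s))$, and checks by a one-line chord estimate that this sub-arc lies inside $\deep(s)$: for $p$ at arc-distance at least $\alpha(\delta(s))$ from $u$ one has $|\seg{pu}| \geq \delta(s)$, whence $|\seg{pu}| + |s| \geq |s| + \delta(s) > \alpha(s) - \alpha(\delta(s)) \geq d(p,v)$. The length of this sub-arc is $\alpha(s) - 2\alpha(\delta(s)) = |s| - 4\delta(\delta(s))$, giving the factor~$4$. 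So your guess at the end is essentially right: the paper's containment argument is precisely the ``crude'' version that loses a factor of two, though it does so by a direct chord bound rather than a Taylor expansion. Your approach buys the optimal constant at the cost of solving the boundary equation; the paper's approach buys brevity, since for its applications any bound of the form $|s| - 0.02$ suffices.

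One small expository point: you assert without proof that $\theta^{\ast}$ lies in the interval $[\delta(a), \delta(a)+a/2]$. The lower bound is immediate (since the defining inequality fails at $\theta = \delta(a)$), and the upper bound follows a posteriori from your own estimate $\theta^{\ast} \leq \delta(a) + \delta(\delta(a)) < \delta(a) + a/2$, but it would be cleaner to note this explicitly.
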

\begin{proof}
  Let $s = uv$ and let $u_2 = u + \alpha(\delta(s))$ and
  $v_2 = v - \alpha(\delta(s))$.  Since $\delta(s) < \alpha(\delta(s))
  < \alpha(s)/2$, the arc $\arc{u_2v_2}$ is non-empty and lies inside
  the inner umbra of~$s$.

  We claim that $\arc{u_2v_2} \subset \deep(s)$. Indeed, for $p \in
  \arc{u_2v_2}$ such that $d(p, u) < d(p, v)$, we have
  $|\seg{pu}| \geq |\seg{u_2u}| = \delta(s)$, and so
  $d(p, v) = |\arc{pv}| \leq |\arc{u_2v}| = \alpha(s) -
  \alpha(\delta(s)) < |s| + \delta(s) \leq |s| + |\seg{pu}|$.

  The arc length of $\arc{u_2v_2}\subset \deep(s)$ is~$\alpha(s) -
  2\alpha(\delta(s))$.  Since $\alpha(x) = x + 2\delta(x)$,
  this is $|s| + 2\delta(s) - 2(\delta(s) + 2\delta(\delta(s))) =
  |s| - 4\delta(\delta(s))$.

  Finally, we observe that the function $x \mapsto \delta(\delta(x))$
  is increasing and $\delta(\delta(2)) < 0.005$.
\end{proof}

Let us now fix a target diameter of the form~$\pi - \ds$, for some
$\ds \in [0, \pi - 2]$.  To achieve the target diameter, pairs of
points $p, q \in C$ that span an angle of at most $\pi - \ds$ do not
need a shortcut, so it suffices to consider pairs of points $p, q \in
C$ where $q = p + \pi + \xi$, for~$-\ds \leq \xi \leq \ds$.  We
represent these point pairs by the rectangle~$\strip(\ds) = [0,2\pi]
\times [-\ds,+\ds]$, where $(\theta,\xi)$ corresponds to the pair of
points $p = \theta-\xii$ and $q = \theta + \pi + \xii$, as illustrated
in \figurename~\ref{fig:notations2}.  So the counter-clockwise angle
from $p$ to $q$ is~$\pi + \xi$.

\begin{figure}[thb]
  \centerline{\includegraphics[width=.8\textwidth]{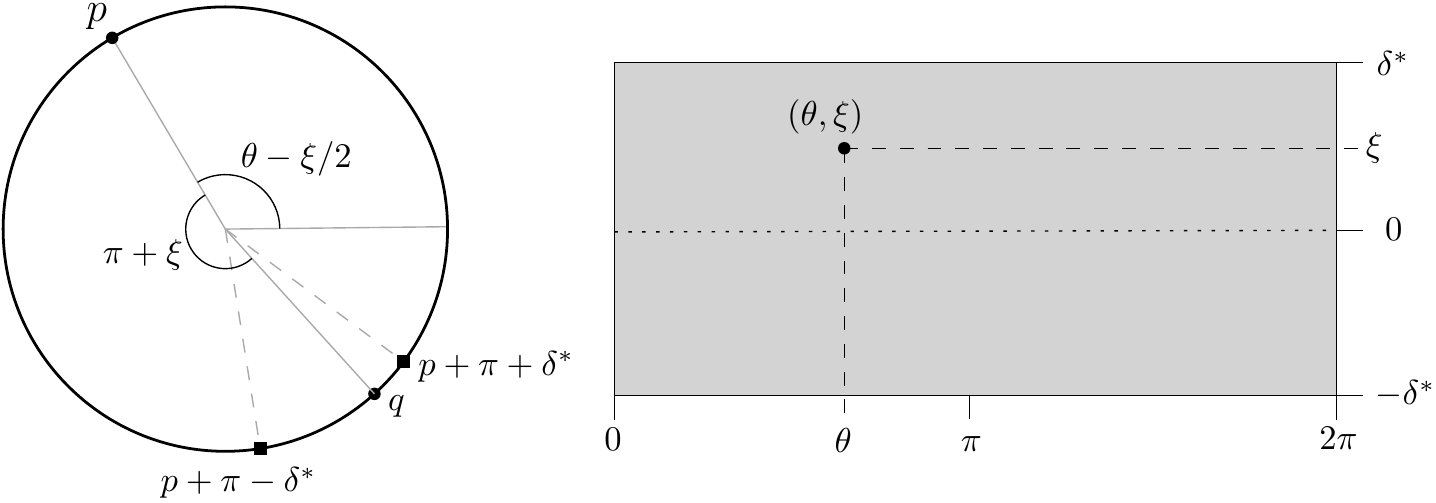}}
  \caption{(left) $(\theta,\xi)$ corresponds to the pair of points $p
    = \theta-\xii$ and $q = \theta + \pi + \xii$. (right)
    $\strip(\ds)$ represents all pair of points $p = \theta-\xii$ and
    $q = \theta + \pi + \xii$ with $-\ds \leq \xi \leq \ds$. }
  \label{fig:notations2}
\end{figure}

$\strip(\ds)$ is topologically a cylinder: the right edge~$\theta =
2\pi$ is identified with the left edge~$\theta = 0$.  Furthermore, if
the point pair~$(p,q) \in C\times C$ corresponds to $(\theta,\xi)$,
then the point pair~$(q,p)$ corresponds to $(\theta+\pi, -\xi)$.
Since $d_{S}(p,q) = d_{S}(q,p)$, we could therefore identify the
middle segment $\theta = \pi$ with the left edge~$\theta = 0$, but
with opposite orientation, resulting in a M\"{o}bius strip topology.
As will become clear shortly, for our purposes it is easier to work
with the cylinder topology, but keep in mind that, for instance, the
upper boundary $\xi = \ds$ and the lower boundary $\xi= -\ds$
of~$\strip(\ds)$ really represent the same point pairs.

For a shortcut~$s$, we define the region~$\reg(s,\ds) \subset
\strip(\ds)$ consisting of those pairs $(\theta, \xi) \in \strip$
where $\dists(\theta-\xii, \theta+\pi+\xii) \leq \pi - \ds$.  (In the
following, we will use~$\dists(p, q)$ for $d_{\{s\}}(p,q)$.)

Let us fix a shortcut~$s$ of length~$a > 0$, and let $\alpha =
\alpha(a)$ and $\delta = \delta(a)$.  Rotating a shortcut around the
origin means translating~$\reg(s, \ds)$ horizontally in (the
cylinder)~$\strip(\ds)$.  We can thus choose $s$ to be vertical and
connect the points~$-\aii$ and~$\aii$.  This implies that the umbra
of~$s$ consists of the two intervals~$[-\aii+\delta, \aii-\delta]$ and
$[\pi-\aii+\delta, \pi+\aii-\delta]$.  The radiance of $s$ consists of
the two intervals~$[\aii,\pi-\aii]$ and~$[\pi+\aii,2\pi-\aii]$.

The following function gives the length of the path from~$p$ to~$q$
that uses the shortcut~$s$ from top to bottom, that is, from the
point~$\aii$ to~$-\aii$:
\[
f(\theta,\xi) := |\aii - p| + a + |q - (2\pi-\aii)|, \qquad
\text{where} \quad (p,q) = (\theta-\xii, \theta+\pi+\xii).
\]
By the observation about the M\"{o}bius strip topology above, it
suffices to understand~$\reg(s,\ds)$ for~$0 \leq \theta \leq \pi$.  We
claim that for $0 \leq \theta \leq \pi$ we have $ \dists(p, q) < \pi -
\ds$ if and only if $f(\theta, \xi) < \pi - \ds$.

This is clearly true if the shortest path from~$p$ to~$q$ uses~$s$
from top to bottom, or not at all, because the length of the shorter
circle arc between~$p$ and~$q$ is $\pi - |\xi| \geq \pi - \ds$.  It
remains to consider the case when the shortest path uses~$s$ from
bottom to top.  This can only happen when $p$ is closer to the bottom
end of~$s$ than to its top end---in other words, when $\pi < p <
2\pi$.  Since $0 \leq \theta \leq \pi$ and $p = \theta - \xii$, this
implies either $\theta < \dsii$ and $\xi > 2\theta$, or $\theta > \pi
- \dsii$ and $\xi < -2(\pi - \theta)$.  Since $q = \theta + \pi
+ \xii$, the first case implies $\pi \leq q \leq \pi + \ds < 2\pi$,
while the second case implies $\pi < 2\pi - \ds \leq q \leq 2\pi$.  In
both cases, $q$ lies closer to the bottom end of the shortcut than to
its top end, a contradiction to the shortcut being used from bottom to
top to go from~$p$ to~$q$.

\medskip

It follows that for $0 \leq \theta \leq \pi$, we have $(\theta, \xi)
\in \reg(s, \ds)$ if and only if $f(\theta, \xi) \leq \pi - \ds$.  To
analyze~$f$, we partition the rectangle~$[0,\pi]\times[-\ds,\ds]$ into
regions, depending on the signs of $\aii - p$ and $q - (2\pi-\aii)$.
First, we have $p < \aii$ if and only if $\xi > 2\theta-\alpha$.  This
is the lightly shaded region~$A$ above the blue line in
Figure~\ref{fig:abcd}(left).
\begin{figure}[thb]
  \centerline{\includegraphics[scale=0.9]{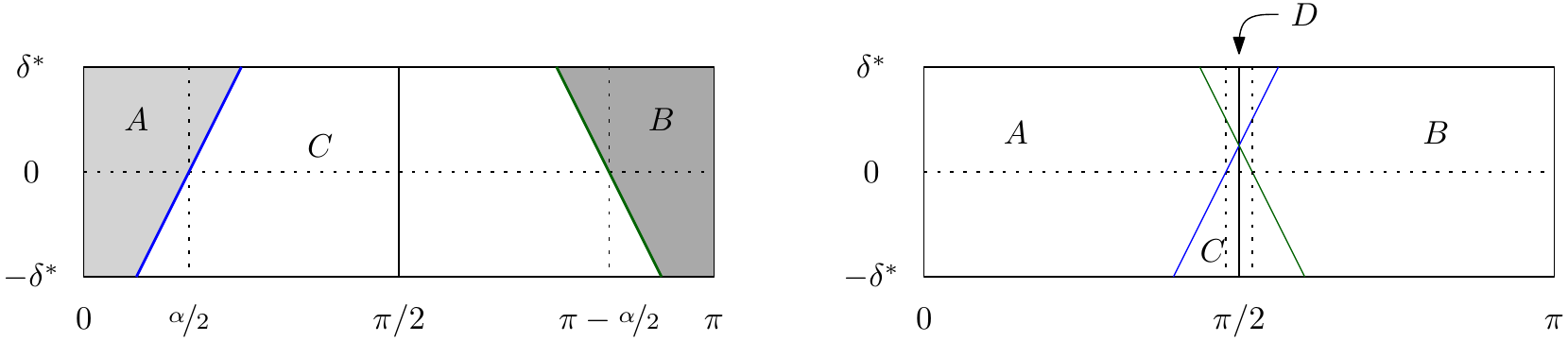}}
  \caption{The regions~$A, B, C, D$.}
  \label{fig:abcd}
\end{figure}
Second, we have $q > 2\pi-\aii$ if and only if $\xi > 2\pi - \alpha -
2\theta$. This is the darker region~$B$ above the green line in
Figure~\ref{fig:abcd}(left). If the two regions do not intersect then
we get three regions as shown in Figure~\ref{fig:abcd}(left).
Otherwise, if $\alpha > \pi-\ds$, or equivalently, $\ds > \pi - a -
2\delta$, then the regions intersect and we get four regions as
illustrated in Figure~\ref{fig:abcd}(right).  We now study
$\reg(s,\ds)$ independently for each of the three or four regions.

In region~$A$, we have $p < \aii$ and $q < 2\pi-\aii$.
It follows that
\begin{align*}
f(\theta,\xi) & = \aii - p + \alpha - 2\delta + 2\pi - \aii - q \\
& = -\theta+\xii + \alpha - 2\delta + 2\pi - \theta - \pi -\xii \\
& = \pi - 2\delta + 2(\aii - \theta).
\end{align*}
This implies that $f(\theta,\xi) \leq \pi - \ds$ if and only if
$\theta \geq \aii + \dsii - \delta = \nicefrac{a}{2} + \dsii$.  This
is the blue area as shown in \figurename~\ref{fig:abcd2}.
\begin{figure}[thb]
  \centerline{\includegraphics[width=0.9\textwidth,page=2]{fig_abcd}}
  \caption{The region~$\reg(s,\ds)$ in four different situations.}
  \label{fig:abcd2}
\end{figure}

In region~$B$, we have $p \geq \aii$ and $q > 2\pi-\aii$. This implies
\begin{align*}
f(\theta,\xi) & = p - \aii + \alpha - 2\delta + q - 2\pi + \aii \\
& = \theta-\xii + \alpha - 2\delta + \theta + \pi +\xii -2\pi \\
& = 2(\theta - (\pi-\aii)) + \pi - 2\delta,
\end{align*}
and so we have $f(\theta,\xi) \leq \pi-\ds$ if and only if $\theta
\leq \pi-\aii-\dsii+\delta$.  This is the green area in
\figurename~\ref{fig:abcd2}.

Next, in region~$C$, we have $p \geq \aii$ and $q \leq 2\pi-\aii$.
Therefore,
\begin{align*}
f(\theta,\xi) & = p - \aii + \alpha - 2\delta + 2\pi - \aii - q\\
& = \theta-\xii - 2\delta + 2\pi - \theta - \pi -\xii \\
& = \pi - 2\delta - \xi.
\end{align*}
We have $f(\theta,\xi) \leq \pi-\ds$ if and only if $\xi \geq \ds -
2\delta$.  This is the red area in \figurename~\ref{fig:abcd2}.

When $\alpha > \pi-\ds$ regions $A$ and $B$ intersect in region~$D$,
as shown in Figure~\ref{fig:abcd}(right). In region~$D$ we have $p < \aii$
and $q > 2\pi-\aii$, and therefore
\begin{align*}
f(\theta,\xi) & = \aii - p + \alpha - 2\delta + q - 2\pi + \aii\\
& = 2\alpha - 2\delta - 2\pi -\theta+\xii + \theta + \pi + \xii \\
& = 2(\alpha - \delta) - \pi + \xi \\
& = 2(a + \delta) - \pi + \xi,
\end{align*}
since $\alpha - \delta = a + \delta$.  Thus, we have $f(\theta, \xi)
\leq \pi-\ds$ if and only if $\xi \leq 2(\pi - a - \delta) - \ds$.  This is the
yellow area in region~$D$ in \figurename~\ref{fig:abcd2}. There are
two cases that can occur, as is shown on the bottom left and bottom
right of \figurename~\ref{fig:abcd2}. We postpone the discussion of
these cases to the proof of the following lemma, which summarizes our
discussions above.
\begin{lemma}
  \label{lem:region}
  Let $\ds \in [0,\pi - 2]$, and let $s$ be a shortcut of length $a\in
  (0,2]$.  Then, the region~$\reg(s,\ds)$ of $s$ in the
  cylinder~$\strip(\ds) = [0,2\pi] \times [-\ds,+\ds]$ forms two identical
  rectangles whose width is exactly $\pi - a - \ds$ and whose height is
  \[ \begin{cases}
    2\ds            & \text{if $\ds \leq \delta(a)$} \\
    2\delta(a)      & \text{if  $\ds > \delta(a)$
      and $\ds \leq \pi - a - \delta(a)$} \\
    2(\pi - a -\ds) & \text{otherwise.}
  \end{cases}
  \]
\end{lemma}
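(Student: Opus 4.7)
My plan is to assemble the per-region analyses of $A$, $B$, $C$, $D$ just completed into a single rectangular description of $\reg(s,\ds)$. By the M\"{o}bius-strip symmetry $(\theta,\xi) \sim (\theta+\pi,-\xi)$, the restriction of $\reg(s,\ds)$ to $[\pi,2\pi]\times[-\ds,\ds]$ is the image, under this involution, of its restriction to the left half $[0,\pi]\times[-\ds,\ds]$, so it suffices to show the latter is a rectangle; the other half is then a congruent copy, yielding the two identical rectangles in the statement.

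I would first establish the width uniformly. Claim: the $\theta$-range of $\reg(s,\ds) \cap ([0,\pi] \times [-\ds,\ds])$ is exactly $[a/2 + \ds/2,\; \pi - a/2 - \ds/2]$. In region~$A$ the $\reg$-condition is $\theta \ge a/2 + \ds/2$, and symmetrically in~$B$ it is $\theta \le \pi - a/2 - \ds/2$. For region~$C$, if $(\theta,\xi) \in C$ with $\theta < a/2 + \ds/2$, then $\xi \le 2\theta - \alpha < \ds - 2\delta$; when $\ds > \delta$ this contradicts the $\reg$-constraint $\xi \ge \ds - 2\delta$, and when $\ds \le \delta$ we have $\ds - 2\delta \le -\ds$, forcing $\xi < -\ds$ outside the strip. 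A symmetric argument for $\theta > \pi - a/2 - \ds/2$ handles the other edge; the case of region~$D$ is analogous, using its constraint $\xi \le 2(\pi - a - \delta) - \ds$. Thus the width is exactly $\pi - a - \ds$ in all three lemma cases.

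For the height I would split along the three cases, checking which of the $C$- and $D$-constraints are active. In Case~1 ($\ds \le \delta$), $\ds - 2\delta \le -\ds$ makes the $C$-constraint vacuous in the strip; if $D$ is nonempty (i.e.\ $\alpha > \pi - \ds$), the inequality $\ds \le \delta \le \pi - a - \delta$ gives $2(\pi - a - \delta) - \ds \ge \ds$, so the $D$-constraint is vacuous too, and the full strip-height $2\ds$ is preserved. In Case~2 ($\delta < \ds \le \pi - a - \delta$), the $C$-constraint trims the bottom at $\xi = \ds - 2\delta > -\ds$; when $D$ is also present (the subcase $\ds > \pi - a - 2\delta$), we still have $2(\pi - a - \delta) - \ds \ge \ds$, so the top is not trimmed, giving height $2\delta$. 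In Case~3 ($\ds > \pi - a - \delta$), $D$ is automatically present and both constraints bite, giving $\xi \in [\ds - 2\delta,\; 2(\pi - a - \delta) - \ds]$ of height $2(\pi - a - \ds)$.

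To conclude I would verify that the four regional pieces of $\reg$ glue into an axis-aligned rectangle without extra notches. The key coincidence is that the slanted lines $\xi = 2\theta - \alpha$ and $\xi = 2\pi - \alpha - 2\theta$ meet the horizontal cuts $\xi = \ds - 2\delta$ and, in Case~3, $\xi = 2(\pi - a - \delta) - \ds$ exactly at $\theta = a/2 + \ds/2$ and $\theta = \pi - a/2 - \ds/2$---a short check using $\alpha = a + 2\delta$. The main obstacle, in my view, is not any single step but the bookkeeping: ensuring the three lemma cases (and the two subcases inside Case~2) each produce the same rectangular shape and that the corner identities really preclude any leaks along the boundary lines. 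Once this is verified, applying the M\"{o}bius involution yields the second identical rectangle and the lemma follows.
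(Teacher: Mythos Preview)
Your proposal is correct and follows essentially the same approach as the paper: both rely on the per-region $A/B/C/D$ analysis already established before the lemma, reduce to the half-strip $[0,\pi]\times[-\ds,\ds]$ via the M\"obius symmetry, and then run through the three cases of the statement to read off width and height. Your explicit corner-coincidence check (that the slanted boundary lines meet the horizontal cuts exactly at $\theta = a/2+\ds/2$ and $\theta = \pi - a/2 - \ds/2$) is a bit more careful than the paper, which leans on the figures for this, but the underlying argument is the same.
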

\begin{proof}
  We consider each case separately.  Let $\alpha = \alpha(a)$ and
  $\delta = \delta(a)$.

  First assume that $\ds \leq \delta$.  Since $a \leq 2$ and $\delta
  \leq \pii - 1$, we have
  \[
  \pi - a - \delta \geq \pii - 1 = \delta(2) \geq \delta \geq \ds.
  \]
  Thus, we have $\ds \leq \pi - a - \delta$ and $2(\pi - a - \delta) -
  \ds \geq \ds$.  This implies that the region~$\reg(s,\ds)$ contains
  the whole region~$D$ if $D$ is nonempty.  In this case, $\reg(s,\ds)$
  forms two identical rectangles that span the entire height $2\ds$
  of~$\strip(\ds)$, as shown in Figure~\ref{fig:abcd2}(top right).  Its
  width is determined by
  \[
  (\pi - \aii -\dsii + \delta) - (\aii + \dsii - \delta) = \pi - \alpha
  + 2\delta - \ds = \pi - a - \ds,
  \]
  since the left and right boundaries of~$\reg(s,\ds)$ are $\aii +
  \dsii - \delta \leq \theta \leq \pi - \aii -\dsii + \delta$.

  Second, suppose that $\ds > \delta$ and $\ds \leq \pi - a - \delta$.
  Then, we again have $2(\pi - a - \delta) - \ds \geq \ds$ and the
  region~$\reg(s,\ds)$ contains the whole region~$D$ if $D$ is
  nonempty.  In this case, $\reg(s,\ds)$ forms two rectangles
  in~$\strip(\ds)$. One touches the top boundary of~$\strip(\ds)$ (as
  shown in \figurename~\ref{fig:abcd2}(top left, bottom left)), the
  other one the bottom boundary.  The height of the rectangle is exactly
  $2\delta$ as $\reg(s,\ds)$ in region~$C$ is delimited by $\xi \geq
  \ds - 2\delta$, while the width of the rectangle is $\pi - a - \ds$ as
  above.

  Finally, in the remaining case $\ds > \pi - a - \delta$.  Then, $D$
  must be nonempty as $\ds > \pi - a - 2\delta = \pi - \alpha$.  In
  this case, the region~$\reg(s,\ds)$ in region~$D$ is delimited by
  $\xi \leq 2(\pi - a - \delta) - \ds$.  Since $\ds > \pi - a -
  \delta$, we have a strict inequality $2(\pi - a - \delta) - \ds <
  \ds$.  Figure~\ref{fig:abcd2}(bottom right) illustrates this case.
  Observe on one hand that the horizontal width and the vertical
  height of~$\reg(s,\ds) \cap D$ (the yellow area in the figure) is
  exactly $\pi - a - \ds$.  On the other hand, the width and the
  height of~$\reg(s,\ds) \cap C$ (the red area in the figure) is also
  equal to $\pi - a -\ds$ since $(2(\pi - a - \delta) - \ds) - (\ds -
  2\delta) = 2(\pi - a - \ds)$.  Thus, $\reg(s,\ds) \cap (C \cup D)$
  is of width $\pi - a - \ds$ and thus fits in between $\aii + \dsii -
  \delta \leq \theta \leq \pi - \aii -\dsii + \delta$.  So, the
  region~$\reg(s,\ds)$ of $s$ again forms two rectangles
  in~$\strip(\ds)$. Neither of them touches a boundary
  of~$\strip(\ds)$, both have width~$\pi - a - \ds$ and height~$2(\pi
  - a - \ds)$.
\end{proof}
Note that if $\ds \leq \delta(2) = \pii - 1$, then it always holds
that $\ds \leq \pi - a - \delta(a)$ for any $0 \leq a \leq 2$ since
$\pi - a - \delta(a) \geq \pi - 2 - \delta(2) = \delta(2) \geq \ds$.
Hence, the last case of Lemma~\ref{lem:region} where $\ds > \pi - a -
\delta(a)$ only happens when $\ds > \delta(2) = \pii - 1$.

We will also be interested in the length of the intersection
of~$\reg(s,\ds)$ with the middle line $\midline = \{ \xi = 0\}$ of
$\strip(\ds)$ and its upper boundary $\boundary = \{ \xi = \ds\}$.
Note that both $\midline$ and $\boundary$ have length $2\pi$.
We have the following corollary to Lemma~\ref{lem:region}:
\begin{corollary}
  \label{coro:region}
  Let $\ds\in [0,\pi - 2]$, and let $s$ be a shortcut of length $a
  \in (0,2]$.  Then
  \[
  |\midline \cap \reg(s,\ds)| =
  \begin{cases}
    2(\pi - a - \ds) & \text{if $\delta(a) \geq \dsii$} \\
    0                & \text{otherwise.}
  \end{cases}
  \]
  and
  \[
  |\boundary \cap \reg(s,\ds)| =
  \begin{cases}
    2(\pi - a - \ds)  & \text{if $\delta(a) \geq \ds$} \\
    \pi - a - \ds     & \text{if $\delta(a) < \ds \leq \pi - a -
      \delta(a)$} \\ 
    0               & \text{otherwise.}
  \end{cases}
  \]
\end{corollary}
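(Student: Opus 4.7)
The plan is to derive both formulas directly from Lemma~\ref{lem:region} by computing where the horizontal lines $\xi = 0$ (for $\midline$) and $\xi = \ds$ (for $\boundary$) cut the two rectangles that together form $\reg(s,\ds)$. Since the two rectangles always have width $\pi - a - \ds$, any non-empty intersection with a horizontal line has length exactly $\pi - a - \ds$, so the computation reduces to checking, in each of the three cases of Lemma~\ref{lem:region}, whether each of the two rectangles meets the chosen horizontal line. I will exploit the fact that the two rectangles are related by the cylinder symmetry $(\theta,\xi) \mapsto (\theta+\pi, -\xi)$, so I can focus on the ``top'' rectangle and infer the bottom one by reflection.

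In Case~1 ($\ds \leq \delta(a)$), the rectangles span the full vertical range $[-\ds,\ds]$, so both $\midline$ and $\boundary$ meet both rectangles and contribute $2(\pi - a - \ds)$. Since $\ds \leq \delta(a)$ implies both $\delta(a) \geq \ds$ and $\delta(a) \geq \dsii$, this matches the top cases of both formulas. In Case~2 ($\delta(a) < \ds \leq \pi - a - \delta(a)$), the top rectangle occupies $\xi \in [\ds - 2\delta(a),\, \ds]$ and the bottom rectangle is its reflection in $\xi$; consequently $\boundary = \{\xi = \ds\}$ meets only the top rectangle for a total of $\pi - a - \ds$, while $\midline = \{\xi = 0\}$ meets the top rectangle iff $\ds - 2\delta(a) \leq 0$, i.e., $\delta(a) \geq \dsii$, in which case it meets both rectangles by symmetry for a total of $2(\pi - a - \ds)$.

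The only case that needs care is Case~3 ($\ds > \pi - a - \delta(a)$). Reading off the $\xi$-constraints in regions~$C$ and $D$ in the proof of Lemma~\ref{lem:region}, the top rectangle lies strictly in the interior of $\strip(\ds)$ at $\xi \in [\ds - 2\delta(a),\; 2(\pi - a - \delta(a)) - \ds]$, so $\boundary$ misses both rectangles and contributes $0$, as claimed. For $\midline$, I need $\ds - 2\delta(a) \leq 0 \leq 2(\pi - a - \delta(a)) - \ds$. The first inequality is once again $\delta(a) \geq \dsii$; the main technical step is to verify that the second inequality is automatic. This follows from the observation
\[
2(\pi - a - \delta(a)) = 2\pi - a - \alpha(a) \geq \pi - 2 \geq \ds,
\]
using $a \leq 2$, $\alpha(a) \leq \pi$, and $\ds \leq \pi - 2$. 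Hence $\midline$ meets both rectangles iff $\delta(a) \geq \dsii$, contributing $2(\pi - a - \ds)$.

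Assembling the three cases, the condition for a non-zero $\midline$-intersection collapses uniformly to $\delta(a) \geq \dsii$ (Case~1 satisfies it automatically, Cases~2 and 3 make it explicit), and the three-way split for $\boundary$ matches the three cases of Lemma~\ref{lem:region} exactly. The only non-routine ingredient is the bound $2(\pi - a - \delta(a)) \geq \pi - 2$ needed in Case~3; everything else is bookkeeping on top of Lemma~\ref{lem:region}.
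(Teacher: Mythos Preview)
Your proof is correct and follows the natural approach: the paper states this result as an immediate corollary to Lemma~\ref{lem:region} without giving an explicit proof, and your case-by-case reading off of the vertical extent of the two rectangles is exactly the intended derivation. Your handling of Case~3 is careful where it needs to be---in particular the verification that $2(\pi - a - \delta(a)) = 2\pi - a - \alpha(a) \geq \pi - 2 \geq \ds$ ensures the midline really is reached by both rectangles once $\delta(a) \geq \dsii$, which is the one step not entirely obvious from the statement of Lemma~\ref{lem:region} alone.
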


\section{Up to five shortcuts}
\label{sec:up2five}

In this section we derive the exact value of~$\diam(k)$ for $k \in
\{2, 3, 4, 5\}$ and the unique optimal configuration of shortcuts in
each case.  The proof is quite easy, comparing the areas of~$\reg(s,
\ds)$ with the area of~$\strip(\ds)$, if one assumes that the shortest
path between any pair of points uses at most one shortcut.  Showing
that using a combination of shortcuts does not help takes considerable
additional effort.

\subsection{Using only one shortcut}

Again we consider a target diameter of the form~$\pi - \ds$, with $\ds
\in [0,\pi - 2]$.  By Lemma~\ref{lem:region}, the region~$\reg(s,\ds)$
of a shortcut~$s$ of length~$a$ consists of two rectangles of width
$\pi - a - \ds$ and height~$2\delta(a)$ for $\delta(a) < \ds$, and
height $2\ds$ for $\delta(a) \geq \ds$.  We define $\as$ such that
$\delta(\as) = \ds$, or $\as = 2$ when $\ds > \delta(2)$.

Then the area~$A(a, \ds)$ of~$\reg(s,\ds)$ is
\[
A(a, \ds) = \left\{ \begin{array}{lcl}
4\ds(\pi - a - \ds) & \text{for} & a > \as \\
4\delta(a)(\pi - a - \ds) & \text{for} & a \leq \as
\end{array}
\right.
\]
\begin{lemma}
  \label{lem:area-max}
  For fixed $\ds \leq 0.7$, the function~$a \mapsto A(a, \ds)$ is
  increasing for $a \leq \as$ and decreasing for $a \geq \as$. Its
  maximum value is $A(\as, \ds) = 4\ds(\pi - \as -\ds)$.
\end{lemma}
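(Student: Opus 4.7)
The ``decreasing'' half for $a \geq \as$ will be immediate from the formula $A(a,\ds) = 4\ds(\pi - a - \ds)$, which is linear in~$a$ with slope $-4\ds \leq 0$. The main task is to show that $F(a) := \delta(a)(\pi - a - \ds)$ is non-decreasing on $[0, \as]$, since $A(a,\ds) = 4F(a)$ in that regime. I plan to differentiate, obtaining $F'(a) = \delta'(a)(\pi - a - \ds) - \delta(a)$, and prove $F'(a) \geq 0$. Because $\partial F'/\partial \ds = -\delta'(a) \leq 0$, the quantity $F'$ is non-increasing in~$\ds$, so it suffices to treat the worst case $\ds = 0.7$. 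Since $\delta(2) = \pii - 1 < 0.7$, this worst case has $\as = 2$, and the required inequality becomes
\[
\delta'(a)\bigl(\pi - a - 0.7\bigr) \;\geq\; \delta(a)
\qquad \text{for all } a \in [0, 2].
\]

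The first step uses raw convexity. Because $\delta(0) = 0$ and $\delta'$ is non-decreasing, $\delta(a) = \int_0^a \delta'(t)\,dt \leq a\,\delta'(a)$, and substituting reduces the desired inequality to $\delta'(a)(\pi - 2a - 0.7) \geq 0$, valid for $a \leq (\pi - 0.7)/2 \approx 1.22$. To push further I would establish the sharper bound $\delta(a) \leq a\,\delta'(a)/3$ on $[0,2]$ by noting that $p(a) := a\,\delta'(a)/3 - \delta(a)$ vanishes at $a=0$ and has derivative $p'(a) = (a\,\delta''(a) - 2\,\delta'(a))/3$; after substituting the explicit expressions for $\delta'$ and $\delta''$ and putting $u = 4 - a^2$, this reduces to $u^{3/2} - 3u + 4 \geq 0$ on $[0,4]$, which is elementary (the minimum is $0$, attained at $u=4$). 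Plugging the stronger bound back in gives $F'(a) \geq \delta'(a)\bigl(\pi - 0.7 - \tfrac{4a}{3}\bigr)$, non-negative for $a \leq 3(\pi - 0.7)/4 \approx 1.83$.

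For the remaining short interval $a \in [1.83, 2]$, I would exploit the blow-up $\delta'(a) \to \infty$ as $a \to 2$, while $\delta(a) \leq \delta(2) = \pii - 1$ stays bounded. Subdividing $[1.83, 2]$ into a handful of sub-intervals (say $[1.83,1.9]$, $[1.9,1.95]$, $[1.95,2]$) and using the monotonicity of $\delta$ and $\delta'$ to bound each sub-interval at its worst endpoint closes the gap with a small finite numerical check. The main obstacle is precisely that no single clean convexity bound is tight enough to cover all of $[0,2]$; the sharper inequality $\delta \leq a\,\delta'/3$ together with the piecewise check near $a=2$ is the least painful route. The hypothesis $\ds \leq 0.7$ furnishes the required slack in $\pi - a - \ds$, and numerical experimentation indicates that the lemma in fact fails for $\ds$ approaching $\pi - 2$, so no substantial strengthening of the statement is possible by a purely convexity-based argument.
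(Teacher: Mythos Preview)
Your proof is correct, and it shares the same opening moves as the paper's argument: both observe that the $a\geq\as$ branch is trivially linear-decreasing, both differentiate $F(a)=\delta(a)(\pi-a-\ds)$, and both reduce to the worst case $\ds\approx 0.7$ (the paper rounds $\pi-0.7$ down to $2.4$).

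Where the arguments diverge is in establishing $F'(a)\geq 0$ on $[0,2]$. The paper rewrites $F'(a)$ as $g(a/2)$ with $g(x)=\tfrac{1.2-x}{\sqrt{1-x^2}}+2x-1.2-\arcsin x$, computes $g'(x)$ explicitly, and shows $g'(x)>0$ via a sign analysis of the single auxiliary function $h(x)=x^2+1.2x-2+2(1-x^2)^{3/2}$, arguing that $h$ has a unique root at $x=0$. This yields a uniform analytic proof covering the whole interval in one stroke. Your approach instead establishes the sharp convexity bound $\delta(a)\leq a\,\delta'(a)/3$ (whose proof via $u^{3/2}-3u+4\geq 0$ is clean and elementary, with equality exactly at $a=0$), which immediately handles $a\leq 3(\pi-0.7)/4\approx 1.83$, and then closes the remaining interval $[1.83,2]$ with a short piecewise monotonicity check. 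Your route trades a slightly messier endgame (the interval subdivision) for a more transparent main step; the paper's route is more uniform but leans on the claim that $h$ has only the root $x=0$, which itself needs a small verification. Your closing remark that the lemma genuinely fails as $\ds\to\pi-2$ is a nice addition not present in the paper.
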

\begin{proof}
  For $a \geq \as$, the function $a \mapsto A(a, \ds)$ is a decreasing
  linear function.  To verify that $A(a, \ds)$ is increasing for $a
  \leq \as$, we consider the derivative $\frac{d}{da} (\delta(a)(\pi -
  a - \ds))$ of the function $a \mapsto \delta(a)(\pi - a - \ds)$ for
  any fixed $\ds$ with $0 < \ds \leq 0.7$.  We have
  \begin{align*}
     \frac{d}{da} (\delta(a)(\pi - a - \ds)) & =
       (\pi - a - \ds) \Big( \frac{1}{\sqrt{4-a^2}} -
     \frac{1}{2}\Big) - \delta(a)\\
     & \geq (2.4 - a) \Big( \frac{1}{\sqrt{4-a^2}} -
     \frac{1}{2}\Big) - \delta(a)\\
     & = \frac{2.4-a}{\sqrt{4-a^2}} + a - 1.2 - \arcsin\big(
     \frac{a}{2} \big) = g\big(\frac{a}{2}\big),
  \end{align*}
  where we define
  \begin{align*}
    g(x) & = \frac{1.2-x}{\sqrt{1-x^2}} + 2x - 1.2 - \arcsin x.
  \end{align*}
  We will prove the lemma by showing that $g(x) > 0$ for $0 < x < 1$.
  Consider again the derivative:
  \begin{align*}
    g'(x) & 
    = \frac{1}{(1-x^{2})^{\nicefrac 32}}\big(
    x^2 + 1.2x - 2 + 2(1-x^2)^{\nicefrac 32}\big).
  \end{align*}
  Set $h(x) = x^2 + 1.2x - 2 + 2(1-x^2)^{\nicefrac 32}$. The function
  $h(x)$ is continuous on the interval~$[0,1]$ and has only a single
  real root at $x = 0$, so $h(\nicefrac 12) > 0$ implies~$h(x) > 0$
  for $0 < x < 1$.  This implies that $g'(x) > 0$ for $0<x<1$, so
  $g(0) = 0$ implies $g(x) > 0$ for $0 < x < 1$, completing the proof.
\end{proof}

Let $k \in \{2,3,4,5\}$.  Since $a \mapsto a + \delta(a)$ is an
increasing function that maps $[0,2]$ to~$[0,\pii+1]$, there is a
unique $\ask$ that solves the equation
\[
\ask + \delta(\ask) = \frac{k-1}{k} \pi.
\]
We set $\dsk := \delta(\ask)$, and will show that this number
determines the optimal diameter for $k$~shortcuts.
\tablename~\ref{table:diameter} shows the numerical values.  For
completeness, we already include the case $k=6$ in the table by
setting $\asvi = 2$.

\begin{table}[hbt]
  \centerline{\begin{tabular}{c||cc|c|c}
      $k$ & $\ask$ & $\dsk$ & $\diam(S) = \pi - \dsk$ & $\mu_k$\\
      \hline
      2  & 1.4782 & 0.0926 & 3.0490 & 1.2219 \\
      3  & 1.8435 & 0.2509 & 2.8907 & 1.5943 \\
      4  & 1.9619 & 0.3943 & 2.7473 & 1.7623 \\
      5  & 1.9969 & 0.5164 & 2.6252 & 1.8526 \\
      6  & 2.0000 & 0.5708 & 2.5708 & 1.8828
  \end{tabular}}
  \caption{The values $\ask$, $\dsk$, $\pi - \dsk$, and $\muk$.}
  \label{table:diameter}
\end{table}

\begin{lemma}
  \label{lem:optimal-no-combination}
  For $k \in \{2, 3, 4, 5\}$ there is a set~$S$ of~$k$ shortcuts that
  achieves $\diam(S) = \pi - \dsk$.\\ Assuming that no pair of points
  uses more than one shortcut, this is optimal and the solution is
  unique up to rotation.
\end{lemma}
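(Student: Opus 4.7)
I would prove the lemma in three steps: a construction, a matching lower bound by area comparison in $\strip(\ds)$, and a rigidity argument for uniqueness.

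For the construction, I take $S$ to consist of $k$ shortcuts of common length $\ask$ whose perpendicular bisectors meet $C$ at the $k$ angles $0, \pi/k, \ldots, (k-1)\pi/k$. Since $\dsk = \delta(\ask)$, the first case of Lemma~\ref{lem:region} applies, so each $\reg(s,\dsk)$ is a pair of full-height rectangles of width $\pi - \ask - \dsk = \pi/k$, using the defining relation $\ask + \dsk = (k-1)\pi/k$. With the chosen placement the $2k$ rectangles are evenly spaced around the cylinder $\strip(\dsk)$ and tile it exactly, so every point pair in $\strip(\dsk)$ is covered and the diameter $\pi - \dsk$ is achieved via single-shortcut paths.

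For the lower bound, the assumption that no shortest path uses more than one shortcut turns ``$\diam(S) \leq \pi - \ds$'' into the covering relation $\strip(\ds) \subseteq \bigcup_{s \in S} \reg(s,\ds)$. Comparing areas and applying Lemma~\ref{lem:area-max} bounds each $|\reg(s,\ds)|$ by $A(\as,\ds) = 4\ds(\pi - \as - \ds)$, where $\as$ is defined by $\delta(\as) = \ds$. Summing over $S$ yields $4k\ds(\pi - \as - \ds) \geq 4\pi\ds$, i.e.\ $\as + \ds \leq (k-1)\pi/k = \ask + \dsk$. Since $a \mapsto a + \delta(a)$ is strictly increasing and $\as + \ds = \as + \delta(\as)$, this forces $\as \leq \ask$ and therefore $\ds \leq \dsk$; combined with the construction it shows that $\pi - \dsk$ is optimal under the single-shortcut assumption.

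For uniqueness, I chase the equalities at $\ds = \dsk$: equality in Lemma~\ref{lem:area-max} forces $|s| = \ask$ for every $s \in S$, and equality in the area bound forces the $2k$ full-height rectangles of width $\pi/k$ to partition $\strip(\dsk)$ up to measure zero. A tiling of the cylinder of circumference $2\pi$ by $2k$ arcs of common length $\pi/k$ is determined uniquely by a single rotational offset, and the constraint that the two rectangles of one shortcut are centered $\pi = k \cdot (\pi/k)$ apart pairs them unambiguously into $k$ shortcut orbits; since a chord of length $\ask$ is recovered from the midpoints of its two umbrae, the whole configuration matches the construction above up to rotation. The main obstacle I expect is this last step: turning the measure-theoretic equality cleanly into a combinatorial tiling statement and then recovering the shortcut orientations from the rectangle pairing.
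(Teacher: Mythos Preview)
Your plan is correct and follows the same route as the paper: the construction with $k$ equal chords of length $\ask$ whose regions tile $\strip(\dsk)$, the area comparison via Lemma~\ref{lem:area-max} giving $\as+\ds\leq\frac{k-1}{k}\pi$, and then uniqueness from the equality case. The paper actually treats the uniqueness step more tersely than you do (one sentence: ``the regions must be non-overlapping, and the solution is unique up to rotation''), so your worry about the rigidity argument being the hard part is somewhat overstated---once all $|s_i|=\ask$ and the full-height rectangles have total width exactly $2\pi$, the tiling and pairing fall out immediately.
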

\begin{proof}
By Lemma~\ref{lem:region}, the region~$\reg(s, \dsk)$ of a shortcut~$s$ of
length~$|s| = \ask$ consists of two rectangles of height~$2\dsk$ and
width $\pi - (\ask + \dsk) = \pi/k$.  Each rectangle covers the entire
height of~$\strip(\ds)$, and by rotating~$s$ about the origin we can
translate the rectangles anywhere inside~$\strip(\ds)$. This implies
that we can use $k$~such rectangles to cover the range $0 \leq \theta
\leq \pi$.  Then for every $(\theta, \xi) \in \strip(\ds)$ there is a
shortcut~$s$ such that $\dists(\theta - \xii, \theta+\pi+\xii) \leq
\pi - \dsk$, and $\diam(S) = \pi - \dsk$.
\figurename~\ref{fig:opt_upto5} shows the resulting configurations.

Assume now that a set $S = \{s_1, \dots, s_k\}$ of $k$ shortcuts is
given with $\diam(S) \leq \pi - \ds$, where $\ds \geq \dsk$, and that
no pair of points uses more than one shortcut.  This implies that the
regions $\reg(s_i, \ds)$ must entirely cover the strip~$\strip(\ds)$,
and in particular
\[
\sum_{i = 1}^{k} A(|s_i|, \ds) \geq 4\ds\pi.
\]
If we choose $\as$ such that $\delta(\as) = \ds$, then $\as \geq \ask$.
By Lemma~\ref{lem:area-max} we have
\[
A(|s_i|, \ds) \leq A(\as, \ds) = 4\ds(\pi - \as - \ds).
\]
From $k A(\as, \ds) \geq 4\ds\pi$ we have $k(\pi - \as - \ds) \geq
\pi$, or $\as + \ds \leq \frac{k-1}{k} \pi$, which implies $\as =
\ask$ and $\ds = \dsk$. But then the regions $\reg(s_{i}, \dsk)$ must
be non-overlapping, and the solution is unique up to rotation.
\end{proof}

\begin{figure}[thb]
  \centerline{\includegraphics[width=.95\textwidth]{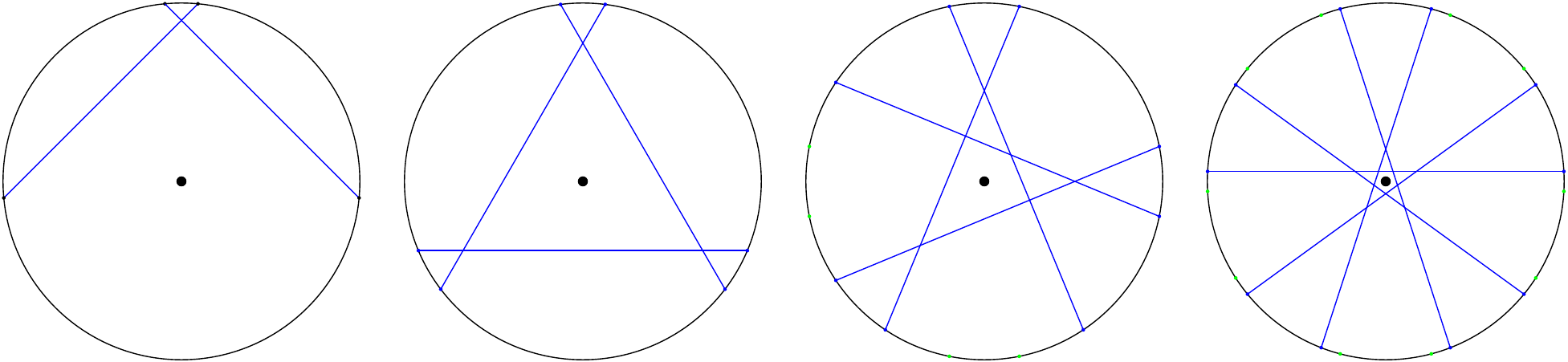}}
  \caption{The optimal shortcut configurations for $k=2, 3, 4, 5$.}
  \label{fig:opt_upto5}
\end{figure}

It remains to show that the configurations in
\figurename~\ref{fig:opt_upto5} are optimal even if combinations of
shortcuts can be used.  The case~$k=2$ is somewhat special and handled
first.

We start by defining $\muk \in [0, 2]$ to be such that $\delta(\muk) =
\dsk/2$.  \tablename~\ref{table:diameter} shows the numerical
values. By Lemma~\ref{lem:region}, $\reg(s, \dsk)$ intersects the
middle line~$\midline$ if and only if $|s| \geq \mu_{k}$.  In other
words, for two antipodal points~$p$ and $q$ we can have $\dists(p,q)
\leq \pi - \dsk$ only if $|s| \geq \muk$.

\subsection{Optimality for two shortcuts}
\label{ssec:two-optimal}

\begin{lemma}
  \label{lem:two-optimal}
  If $S$ is a set of two shortcuts that achieves diameter
  $\diam(S) \leq \pi - \dskii$, then $S$ is identical to the configuration
  of \figurename~\ref{fig:opt_upto5} up to rotation.
\end{lemma}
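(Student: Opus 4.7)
My plan is to extend Lemma~\ref{lem:optimal-no-combination}, which already handles the case when no point pair uses both shortcuts. The new and only task is to show that allowing combined use cannot help. Let $S = \{s_1, s_2\}$ achieve $\diam(S) \leq \pi - \dskii$, and write $a_i = |s_i|$ and $\delta_i = \delta(a_i)$.

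I would focus on the middle line $\midline = \{\xi = 0\}$ of $\strip(\dskii)$, which has length $2\pi$ and represents antipodal pairs (where no improvement is possible without a shortcut). By Observation~\ref{obs:deltas}, a pair $(\theta, \xi)$ whose shortest path uses both shortcuts satisfies $d_S \geq \pi - |\xi| - 2(\delta_1 + \delta_2)$, so on $\midline$ combined use can help only when $\delta_1 + \delta_2 \geq \dskii/2$. In the easier sub-case $\delta_1 + \delta_2 < \dskii/2$, combined use is useless on $\midline$, so by Corollary~\ref{coro:region} the single-use pieces $\midline \cap \reg(s_i, \dskii)$ must already cover $\midline$. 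This forces $a_i \geq \muii$ for both $i$ and $a_1 + a_2 \leq \pi - 2\dskii = 2\asii$. A convexity argument in the spirit of Lemma~\ref{lem:area-max}, applied to the total single-use coverage of $\midline$ under these constraints, then drives $a_1 = a_2 = \asii$ and forces the two rectangles to tile $\midline$ exactly; since they then also tile $\strip(\dskii)$, this pins down the standard configuration up to rotation.

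In the harder sub-case $\delta_1 + \delta_2 \geq \dskii/2$ the combined-use region may intersect $\midline$, but Observation~\ref{obs:umbra} severely constrains its shape. Specifically, a shortest path $p \to u_1 \to v_1 \to u_2 \to v_2 \to q$ using both shortcuts in order forces $u_1, v_1, u_2, v_2$ to appear in this counter-clockwise order along the short arc from $p$ to $q$, and in addition $p \notin U(s_1)$, $q \notin U(s_2)$, $v_1 \notin U(s_2)$, and $u_2 \notin U(s_1)$. I would parameterize by the relative angular position of $s_1$ and $s_2$ and use these umbra constraints to bound the length of $\midline$ covered by combined use in terms of $a_1$ and $a_2$. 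The main obstacle is showing quantitatively that any gain in combined-use coverage is always outweighed by the corresponding loss in single-use coverage, so that the sum of the three coverage contributions on $\midline$ cannot exceed $2\pi$ except in the standard configuration $a_1 = a_2 = \asii$; I expect to reduce this to a monotonicity/derivative computation similar to Lemma~\ref{lem:area-max}, but with an additional correction term for the combined-use region.
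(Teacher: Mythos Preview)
Your case split misleads you. In Case~1 ($\delta_1+\delta_2<\dskii/2$) your own derivation forces both $a_i\geq\muii$, i.e.\ $\delta_i\geq\dskii/2$, whence $\delta_1+\delta_2\geq\dskii>\dskii/2$---a contradiction. Case~1 is therefore vacuous; it certainly does not ``pin down the standard configuration'' as you write. In particular the target configuration (where $\delta_1=\delta_2=\dskii$) lies squarely in Case~2, and that is exactly the case you do not prove: you only sketch a plan involving an unspecified monotonicity computation and a hoped-for trade-off between combined-use and single-use coverage of~$\midline$. That trade-off is the entire substance of the lemma, and you have left it open.

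The paper takes a different and much more direct route that avoids any coverage calculus. It first picks the midpoints $p,q$ of the inner and outer umbra of the longer shortcut~$s_2$; by Observation~\ref{obs:umbra} these points (and indeed the whole interval $[q-\dskii,q+\dskii]\subset\umbra(s_2)$) cannot use~$s_2$ at all, so $s_1$ alone must serve them, which forces $|s_1|\geq\asii$. With both lengths~$\geq\asii$ and the umbras necessarily disjoint, the gap between the two inner umbras is at most $\pi-2\asii=2\dskii$, so the two chords must cross. Now any path using both shortcuts has length at least $|s_1|+|s_2|+x\geq 2\asii+x$, where $x=|\arc{u_1v_2}|$ is the angular overlap of their arcs; this is $\leq\pi-\dskii$ only when $x\leq\dskii$, but then the direct arc $\arc{v_1u_2}$ already has length $\leq\pi-\dskii$, so the combined path is never needed. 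Hence no pair uses more than one shortcut, and Lemma~\ref{lem:optimal-no-combination} finishes. The key idea you are missing is this geometric ``crossing plus overlap'' argument, which replaces your proposed trade-off analysis entirely.
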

\begin{proof}
  Let $S = \{s_1, s_2\}$ with $|s_1| \leq |s_2|$.  Let $p$ and~$q$ be
  the midpoints of the inner and outer umbra of~$s_2$. The shortest
  path between~$p$ and~$q$ cannot use~$s_2$ at all by
  Observation~\ref{obs:umbra}, so~$d_{s_1}(p,q) \leq \pi - \dskii$.
  This implies $|s_1| \geq \muii \approx 1.2219$.  Since $\dskii
  \approx 0.0926 < \muii/2$, the interval $[q - \dskii, q + \dskii]$
  lies in~$\umbra(s_2)$, and so we have $d_{s_1}(p,q') \leq \pi -
  \dskii$ for all~$q' \in [q - \dskii, q + \dskii]$. This implies
  $|s_1| \geq \asii$.

  We next observe that $\umbra(s_1) \cap \umbra(s_2) =
  \emptyset$. Otherwise, Observation~\ref{obs:umbra} applied to an
  antipodal pair in $\umbra(s_1) \cap \umbra(s_2)$ implies $\diam(S)
  = \pi$, a contradiction.

  The two arcs between the inner and outer umbras of $s_1$ have length
  $\pi - |s_1| \leq \pi - \asii$.  The inner umbra $\umbra(s_2)$ has
  length~$|s_2| \geq \asii$ and lies in one of these arcs. That leaves
  a gap of most $\pi - 2\asii = 2\dskii$ between the two inner umbras
  (by definition of~$\asii$, we have $\asii + \dskii = \pii$).  Since
  $\delta(s_2) \geq \delta(s_1) \geq \dskii$, this implies that the
  two shortcuts intersect, see \figurename~\ref{fig:k2}.

  Let $x$ be the length of overlap of the arcs of~$s_1$ and~$s_2$,
  that is, $x = |\arc{u_1 v_2}|$ in \figurename~\ref{fig:k2}.  Any
  path that uses both $s_1$ and $s_2$ has length at least $|s_1| +
  |s_2| + x \geq 2\asii + x = \pi - 2\dskii + x$.  This is bounded by
  $\pi - \dskii$ only if $x \leq \dskii$.  But then the arc
  $\arc{v_1u_2}$ has length at most
  \[
  2\pi - \alpha(s_1) - \alpha(s_2) + x \leq
  2\pi - 2(\asii + 2\dskii) + \dskii =
  \pi + (\pi - 2\asii) - 3\dskii = \pi - \dskii,
  \]
  and there is no reason to use the two shortcuts at all.  It follows
  that there is no pair of points that uses more than one shortcut,
  and Lemma~\ref{lem:optimal-no-combination} implies the claim.
\end{proof}

\begin{figure}[hbt]
  \centerline{\includegraphics{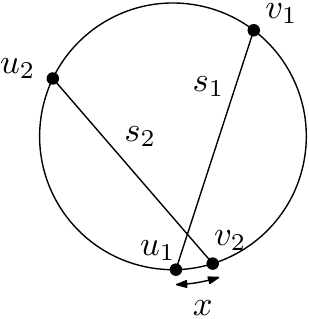}}
  \caption{The two shortcuts must intersect.}
  \label{fig:k2}
\end{figure}

\subsection{Antipodal pairs cannot use combinations of shortcuts}
\label{ssec:antipodal}

The key to the general proof for $3\leq k \leq 6$ is the following
lemma.  We prove it for two separate cases: $k=3$, and
$k\in\{4,5,6\}$.
\begin{lemma}
  \label{lem:antipodal}
  Let $S$ be a set of $k$ shortcuts for $k \in \{3, 4, 5, 6\}$ such
  that $\diam(S) \leq \pi - \dsk$.  Then there is no antipodal pair of
  points $p, q \in C$ such that the path of length $d_{S}(p, q)$ uses
  more than one shortcut.
\end{lemma}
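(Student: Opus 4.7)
The plan is to argue by contradiction. Assume some antipodal pair $(p, q)$, with $q = p + \pi$, has its shortest path using $m \geq 2$ shortcuts $s_1, \ldots, s_m$, traversed in this order along (say) the counter-clockwise arc from $p$ to $q$. I first extract two fundamental inequalities from the monotone CCW structure: replacing each chord $s_i$ by its shorter subtended arc (of angle $\alpha(s_i) = |s_i| + 2\delta(s_i)$) turns the path into a monotone CCW walk of length exactly $\pi$ from $p$ to $q$, so $d_S(p,q) = \pi - 2\sum_{i=1}^m \delta(s_i)$; the bound $d_S(p,q) \leq \pi - \dsk$ then forces $\sum_i \delta(s_i) \geq \dsk/2 = \delta(\muk)$. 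Conversely, the $m+1$ non-negative arcs separating $p$, the shortcuts, and $q$ along the path sum to $\pi - \sum_i \alpha(s_i)$, which yields $\sum_i |s_i| + 2\sum_i \delta(s_i) \leq \pi$ and hence $\sum_i |s_i| \leq \pi - \dsk$. Convexity of $\delta$ with $\delta(0) = 0$ implies super-additivity $\delta(a) + \delta(b) \leq \delta(a+b)$, so iterating gives $\sum_i |s_i| \geq \muk$ as well.

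The key structural observation is a numerical check using Table~\ref{table:diameter}: for each $k \in \{3, 4, 5, 6\}$ one has $2\muk > \pi - \dsk$. Combined with $\sum_i |s_i| \leq \pi - \dsk$, this forces at most one of $s_1, \ldots, s_m$ to satisfy $|s_i| \geq \muk$; the remaining used shortcuts, being shorter than $\muk$, contribute length zero to single-shortcut midline coverage by Corollary~\ref{coro:region}. Extending the argument from Section~2 to the combination $\{s_1, \ldots, s_m\}$, by parametrizing the antipodal axis $(p', p'+\pi)$ and bounding the width of $p'$-positions for which the same monotone path $p' \to s_1 \to \cdots \to s_m \to p'+\pi$ remains feasible (with the mirrored CW case doubling), gives the bound $|\midline \cap \reg(\{s_1, \ldots, s_m\}, \dsk)| \leq 2(\pi - \sum_i \alpha(s_i)) \leq 2(\pi - \muk - \dsk)$.

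The contradiction then comes from the requirement that the midline $\midline$ (of length $2\pi$) be covered by the union of single-shortcut and combination regions. For $k = 3$ with $m = 2$, only one shortcut $s_3$ lies outside the combo, contributing single midline coverage at most $2(\pi - \muk - \dsk)$; together with the at-most-one length-$\geq\muk$ shortcut among $\{s_1, s_2\}$ and the mixed combos $\{s_1, s_3\}$, $\{s_2, s_3\}$, $\{s_1, s_2, s_3\}$, a case analysis based on which $|s_i|$'s exceed $\muk$, and on the geometric placement of the covered intervals on $\midline$, would show the union fails to cover $\midline$. For $k \in \{4, 5, 6\}$ the analogous argument uses the additional shortcuts, balanced against the tighter numerical margin $2\muk - (\pi - \dsk)$, which is why the two cases are handled separately in the paper.

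The main obstacle will be the final bookkeeping: the naive sum of all the coverage upper bounds above exceeds $2\pi$ in some parameter ranges, so the argument must either exploit genuine overlap between the single and combo intervals on $\midline$, or lift the analysis from the 1-dimensional midline to the full 2-dimensional area of $\strip(\dsk)$. In the latter approach, Lemma~\ref{lem:area-max} controls single-shortcut areas, and an analogous area bound for combination regions (derived from the same monotone-path analysis) should suffice to show that any combination used on an antipodal pair strictly reduces the total effective strip area below $4\pi\dsk$, contradicting $\diam(S) \leq \pi - \dsk$.
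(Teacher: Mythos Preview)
Your proposal is a plan, not a proof, and you correctly identify its own gap: the final coverage bookkeeping is not carried out, and you note that the naive sum of your midline bounds exceeds~$2\pi$ in some ranges. That gap is real. Neither of your two suggested fixes (exploiting overlap on~$\midline$, or passing to a $2$-dimensional area bound for combination regions) is executed, and in fact an area bound for combination regions analogous to Lemma~\ref{lem:area-max} is not developed anywhere in the paper---there is no reason to expect it to close cleanly. A secondary issue: your equality $d_S(p,q)=\pi-2\sum_i\delta(s_i)$ assumes the optimal path is monotone along one semicircle, which you do not justify; only the inequality (Observation~\ref{obs:deltas}) is safe, though fortunately that inequality is all you actually use downstream.

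The paper's argument is structurally different and avoids ever bounding ``combination regions'' on~$\midline$. For $k\in\{4,5,6\}$ it sharpens your at-most-one-long observation via Karamata's inequality (Lemma~\ref{lem:shortlong}) to obtain thresholds $\shortk<\longk$ with $\shortk+\longk=\pi-\dsk$: every combination used for an antipodal pair must contain one shortcut of length $\geq\longk$, and all others together have length $\leq\shortk$. It then \emph{removes} the short shortcuts from~$S$, bounds the diameter increase by~$2\ell\,\delta(\shortk)$, and applies the single-shortcut midline and area machinery (Corollary~\ref{coro:region}, Lemma~\ref{lem:area-max}) to the reduced set~$S'$, where no combinations are possible. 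This ``strip the short shortcuts'' reduction is the missing idea in your outline; it is what converts the problem back to one where only single-shortcut regions need to be counted. For $k=3$ the numerics are too loose for that scheme, and the paper instead does a direct geometric case analysis (deep umbra of~$s_2$, placement of $s_1,s_2$ in one semicircle determined by~$s_3$, then a specific near-antipodal pair $(p,q')$) that does not fit your midline-coverage template at all.
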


\begin{proof}[Proof of Lemma~\ref{lem:antipodal} for $k=3$]
  Let $S = \{s_1, s_2, s_3\}$ with $|s_1| \leq |s_2| \leq |s_3|$.
  Again assume the opposite to the statement of the lemma, that is,
  assume that $\diam(S) \leq \pi - \dsiii$ and that there is an
  antipodal pair for which the shortest path between the two points
  uses at least two shortcuts.

  We now show several properties of the
  configuration~$S$.
  \begin{enumerate}[(i)]
  \item $|s_1|< 1.45 < \muiii$: For two shortcuts to be a valid
    combination their combined length must be at most $\pi - \dsiii$,
    hence, $|s_1|\leq(\pi - \dsiii)/2 < 1.45 < \muiii \approx 1.5943$.
  \item $\umbra(s_2) \cap \umbra(s_3) = \emptyset$: Otherwise we
    have an antipodal pair $(p, q)$ with $p, q\in \umbra(s_2) \cap
    \umbra(s_3)$. Using Observation~\ref{obs:umbra} and $|s_1| <
    \muiii$ by~(i) gives $d_S(p, q) = d_{s_1}(p, q) > \pi - \dsiii$.
  \item $|s_2| < \pi/2 < \muiii$: From~(ii) we have $|s_2| + |s_3| <
    \pi$ and thus $|s_2| < \pi/2 < \muiii$.
  \item $\umbra(s_1) \cap \umbra(s_3) = \emptyset$: As
    in~(ii) this follows from $|s_2| < \muiii$.
  \item $\delta(s_1) < 0.09$: From $|s_1| < 1.45$ by~(i).
  \item $\delta(s_2) < 0.12$: From $|s_2| < \pi/2$ by~(iii).
  \item $|s_3| > 1.32$: Pick an antipodal pair $p, q$ with $p$
    in the deep umbra~$\deep(s_2)$.
    This is always possible by Lemma~\ref{lem:deep}.
    By Observations~\ref{obs:deep}
    and~\ref{obs:deltas}, we have $d_{S}(p,q) = d_{\{s_1, s_3\}}(p,q)
    \geq \pi - 2(\delta(s_1) + \delta(s_3))$. This implies
    $\delta(s_1) + \delta(s_3) \geq \dsiii/2$.  Therefore $\delta(s_3)
    \geq \dsiii/4 > 0.06$, implying that~$|s_3| > 1.32$.
  \end{enumerate}

  Now let $p$ and $q$ be the midpoints of the inner and outer umbra
  of~$s_3$.  Without loss of generality assume that $\seg{pq}$ is
  vertical with $p$ below $q$ as shown in Figure~\ref{fig:up2six}.
  Since the umbras of~$s_1, s_2$ are disjoint from $\umbra(s_3)$ and
  $\delta(s_1) \leq \delta(s_2) < 0.12 < |s_3|/2$ by~(vi) and~(vii),
  $s_1$ and $s_3$ do not cross~$\seg{pq}$ and lie either in the left
  or right semicircle determined by~$\seg{pq}$.

  Since $|s_1| \leq |s_2| < \muiii$ by~(i) and~(iii), we have
  $d_{s_1}(p, q) \geq d_{s_2}(p, q) > \pi - \dsiii$, and so the
  pair~$p, q$ must use $s_1$ and $s_2$ in combination.  But this means
  that $s_1$ and $s_2$ lie in the same semicircle of~$C$, let's say
  the left one as shown in Figure~\ref{fig:up2six}.

  Consider now the point~$q'=q - \dsiii + \eps$, for some small~$\eps
  > 0$. Traveling counter-clockwise from~$p$ to~$q'$ cannot use any
  shortcut and has length~$\pi - \dsiii + \eps > \pi - \dsiii$,
  traveling clockwise using~$s_1$ and~$s_2$ and arguing as in
  Observation~\ref{obs:deltas} has distance at least $\pi + \dsiii -
  \eps - 2\delta(s_1) - 2\delta(s_2) > \pi + 0.25 - 0.18 - 0.24 - \eps
  > \pi - 0.25 > \pi - \dsiii$, which is a contradiction.
\end{proof}

\begin{figure}[hbt]
  \centerline{\includegraphics{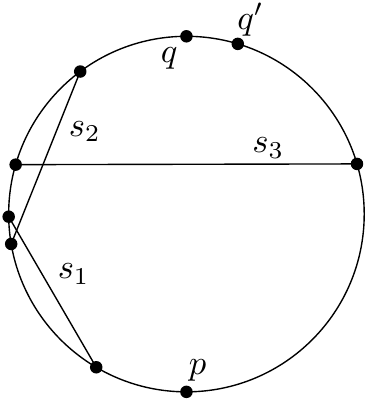}}
  \caption{Proof of Lemma~\ref{lem:antipodal} for~$k{=}3$.}
  \label{fig:up2six}
\end{figure}

In order to handle the remaining case $k \in \{4,5,6\}$ we need a
bound on the lengths of shortcuts that appear in combination.
\begin{lemma}
  \label{lem:shortlong} Let $S' \subset S$ be the set of shortcuts
  used by the shortest path for an antipodal pair $(p, q)$.  If
  $d_{S'}(p,q) \leq \pi - \dsk$, where $k \in \{4, 5, 6\}$, then the
  longest shortcut in $S'$ has length at least~$\longk$, all others
  have total length at most $\shortk$.  Here $\shortk$ and $\longk$,
  with $\shortk < \longk$, are the two solutions to the equation
  $\delta(x) + \delta(\pi - \dsk - x) = \dsk/2$ for $x \in [\pi - \dsk
  - 2, \,2]$.
\end{lemma}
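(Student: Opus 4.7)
The plan is to focus on two numerical invariants of the shortest $p$-to-$q$ path: the longest shortcut length $L$ it uses and the total length $T = \sum_{i \geq 2} a_i$ of the remaining shortcuts $a_2, \ldots, a_m$ in $S'$. First I extract two inequalities these must satisfy. Since $p, q$ are antipodal, $d(p,q) = \pi$, and Observation~\ref{obs:deltas} yields $\pi - \dsk \geq d_{S'}(p,q) \geq \pi - 2\sum_{s \in S'} \delta(s)$, so $\sum_{s \in S'} \delta(s) \geq \dsk/2$. On the other hand the path length upper-bounds the total length of the shortcuts it traverses, giving $L + T \leq \pi - \dsk$. Thus
\begin{equation*}
L + T \leq \pi - \dsk \quad\text{and}\quad \delta(L) + \sum_{i \geq 2} \delta(a_i) \geq \dsk/2.
\end{equation*}

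Next I exploit two elementary consequences of $\delta$ being convex with $\delta(0) = 0$: superadditivity, $\delta(a) + \delta(b) \leq \delta(a+b)$ whenever $a, b, a+b \in [0, 2]$, and the fact that $\delta(x)/x$ is increasing on $(0, 2]$. Assuming first that $T \leq 2$, iterating superadditivity on $a_2, \ldots, a_m$ gives $\sum_{i \geq 2} \delta(a_i) \leq \delta(T)$, and monotonicity of $\delta$ together with $T \leq \pi - \dsk - L$ then yields $F(L) := \delta(L) + \delta(\pi - \dsk - L) \geq \dsk/2$. The function $F$ is convex and symmetric about $(\pi - \dsk)/2$, and by hypothesis its only roots of $F(x) = \dsk/2$ on $[\pi - \dsk - 2, 2]$ are $\shortk$ and $\longk$. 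So $F(L) \geq \dsk/2$ forces $L \leq \shortk$ or $L \geq \longk$.

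The crux is to rule out $L \leq \shortk$. For this I use $\delta(a_i) \leq (a_i/L)\delta(L)$, valid since $a_i \leq L$ and $\delta(x)/x$ is increasing, to get $\sum_i \delta(a_i) \leq (\pi - \dsk) \delta(L)/L \leq (\pi - \dsk) \delta(\shortk)/\shortk$, and then invoke the numerical check $(\pi - \dsk) \delta(\shortk)/\shortk < \dsk/2$ for each $k \in \{4, 5, 6\}$, which contradicts $\sum \delta(a_i) \geq \dsk/2$. This same ratio bound also disposes of the $T > 2$ case skipped in the superadditivity step: $T > 2$ together with $L + T \leq \pi - \dsk$ forces $L < \pi - \dsk - 2$, and $\pi - \dsk - 2 < \shortk$ for $k \in \{4, 5, 6\}$, reducing to the case just treated. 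Finally, $L \geq \longk$ together with $\shortk + \longk = \pi - \dsk$ (by the symmetry of $F$) gives $T \leq \shortk$, proving both parts of the statement. The main obstacle is the numerical inequality $(\pi - \dsk)\delta(\shortk)/\shortk < \dsk/2$: it is not immediate from the defining equation for $\shortk$ and $\longk$ alone, and must be verified separately for each of the three values of $k$, presumably via the same script referenced in the introduction.
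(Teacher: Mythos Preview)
Your proof is correct but takes a different route from the paper. The paper invokes Karamata's (weak-majorization) inequality: assuming the longest shortcut has length less than~$\longk$, the sequence $(\longk,\shortk,0,\ldots,0)$ weakly majorizes the sorted lengths $(|s_1|,\ldots,|s_n|)$, so strict convexity of~$\delta$ gives $\sum_i \delta(|s_i|) < \delta(\longk)+\delta(\shortk)=\dsk/2$, contradicting Observation~\ref{obs:deltas}. No numerical check beyond the defining equation of $\shortk,\longk$ is required. You instead collapse the shorter shortcuts into the single quantity~$T$ via superadditivity to obtain the dichotomy $L\leq\shortk$ or $L\geq\longk$, and then eliminate $L\leq\shortk$ with the secant bound coming from $\delta(x)/x$ being increasing; this last step costs you the extra numerical verification $(\pi-\dsk)\,\delta(\shortk)/\shortk<\dsk/2$ for each~$k$. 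Both arguments rest on the convexity of~$\delta$, but the Karamata comparison to $(\longk,\shortk,0,\ldots)$ packages everything at once---the defining relation $\delta(\longk)+\delta(\shortk)=\dsk/2$ is exactly the bound one needs---whereas your two-stage reduction from the full tuple to the pair $(L,T)$ loses just enough information that an auxiliary inequality must be checked separately. Your route is more elementary in that it avoids naming Karamata; the paper's is tidier and self-contained once $\shortk,\longk$ are defined.
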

\begin{proof}
  The function $x \mapsto \delta(x)$ is increasing and strictly convex
  on the interval $[0,2]$.  Therefore the function $x \mapsto g(x)
  = \delta(x) + \delta(\pi - \dsk - x)$ is strictly convex on the
  interval $[\pi - \dsk - 2, 2]$, it is also symmetric about~$x_{0} =
  (\pi - \dsk)/2$. Since $g(x_0) < \dsk/2 < g(2)$, there are thus
  exactly two solutions $\shortk$ and $\longk$ to the equation~$g(x)
  = \dsk/2$. Because of the symmetry of~$g(x)$ we have $\shortk
  + \longk = \pi - \dsk$. We computed $\shortk$ and~$\longk$
  numerically and list their values in \tablename~\ref{table:values}.

  \begin{table}[hbt]
    \centerline{\begin{tabular}{c||cc|c|cc}
        $k$ & $\ask$ & $\dsk$ & $\muk$ & $\shortk$ & $\longk$ \\
        \hline
        4 & 1.9619 & 0.3943 & 1.7623 & 1.0373 & 1.7100 \\
        5 & 1.9969 & 0.5164 & 1.8526 & 0.7862 & 1.8390 \\
        6 & 2.0000 & 0.5707 & 1.8828 & 0.6958 & 1.8751
    \end{tabular}}
    \caption{Numeric values for $\shortk$, and $\longk$.}
    \label{table:values}
  \end{table}
  
  Let $S' = \{s_1, \dots, s_n\}$ with $|s_1| \geq |s_2| \geq \dots
  \geq |s_n|$.  If $n = 1$ the statement follows from $\muk > \longk$,
  so assume $n \geq 2$.

  We use Karamata's theorem.  It states that if $f$~is a strictly
  convex non-decreasing function on an interval~$I$ and $x_{1} \geq
  x_{2} \geq \dots \geq x_{n}$ and $y_{1} \geq y_2 \geq \dots y_n$ are
  values in~$I$ such that $x_1 + \dots + x_i \geq y_1 + \dots + y_i$
  for all $1 \leq i \leq n$, then
  $\sum_{i=1}^{n}f(x_i) \geq \sum_{i=1}^{n}f(y_i)$, and equality holds
  only if $x_{i} = y_{i}$ for all~$i$.

  We apply this with $f(x) = \delta(x)$ on the interval~$I = [0, 2]$.
  We set $y_{i} = |s_i|$, $x_{1} = \longk$, $x_2 = \shortk$, and $x_i
  = 0$ for $i > 2$.  Assume for a contradiction that $y_1 = |s_1|
  < \longk$.  Then the conditions of Karamata's theorem are satisfied:
  $x_1 = \longk > y_1$ and $x_1 + x_2 = \longk + \shortk = \pi
  - \dsk \geq d_{S'}(p,q) \geq \sum_{i=1}^{n}|s_i| = \sum_{i=1}^{n}
  y_i$.  We thus have $\sum_{i=1}^{n}\delta(s_{i}) < \delta(\longk)
  + \delta(\shortk) = \dsk/2$.  By Observation~\ref{obs:deltas} we
  then have $d_{S'}(p,q) \geq \pi - 2\sum_{i=1}^{n}\delta(s_i) > \pi
  - \dsk$, a contradiction.

  It follows that~$|s_1| \geq \longk$. Then $\sum_{i=1}^{n}
  |s_i| \leq \pi - \dsk$ implies $\sum_{i=2}^{n}|s_i| \leq \pi - \dsk
  - \longk = \shortk$.
\end{proof}

\begin{proof}[Proof of Lemma~\ref{lem:antipodal} for $k\in \{4,
    5, 6\}$]
  Let $S = \{s_1, s_2, \dots, s_k\}$ with $|s_1| \leq |s_2| \leq
  \dots \leq |s_k|$, and assume that some antipodal pair of points
  needs to use more than one shortcut.  By Lemma~\ref{lem:shortlong} this
  means that~$|s_1| \leq \shortk$.

  Consider an antipodal pair $p,q$ with $p \in \deep(s_k)$. By
  Observation~\ref{obs:deep} it cannot use~$s_k$, and needs either a
  single shortcut of length at least~$\muk$, or a shortcut combination
  whose longest element has length at least~$\longk$ by
  Lemma~\ref{lem:shortlong}.  This implies that $|s_{k-1}| \geq
  \longk$.

  Since $|s_{k-1}| + |s_k| \geq 2\longk > \pi + 0.2$, the intersection
  $\umbra(s_{k-1}) \cap \umbra(s_k)$ has total length at least~$0.4$.
  Since it consists of at most four arcs, one arc has length at
  least~$0.1$.  Let $p$ be the midpoint of this intersection arc, and
  $q$ be its antipode.  Since the interval $[p-0.05, p+0.05] \subset
  U(s_{k-1}) \cap U(s_k)$, Lemma~\ref{lem:deep} implies that either
  $p$ or $q$ lies in the deep umbra~$\deep(s_{k-1})$, and either $p$
  or $q$ lies in~$\deep(s_k)$.  By Observation~\ref{obs:deep} the pair
  $(p, q)$ cannot use either $s_{k-1}$ or $s_k$.  As above we can now
  conclude that~$|s_{k-2}| \geq \longk$.

  Let us call a shortcut~$s_i$ \emph{short} if $|s_i| \leq \shortk$,
  and let $\ell$ be the number of short shortcuts.  By the above, $1
  \leq \ell \leq k - 3$.  Let $S' = \{s_{\ell+1}, \dots, s_k\}$ be the
  set of~$k-\ell$ shortcuts that are not short.

  We claim that $\diam(S') \leq \pi - \dsk + 2\ell\delta(\shortk)$.
  Indeed, for any pair of points $p, q \in C$ there is a path from~$p$
  to~$q$ using~$S$ of length at most~$\pi - \dsk$.  Arguing as in
  Observation~\ref{obs:deltas}, we replace a short shortcut~$s_i$ by
  walking along the circle, and obtain a path~$\gamma$ of length at
  most~$\pi - \dsk + \ell\times2\delta(\shortk)$.  By
  Lemma~\ref{lem:shortlong}, if $p, q$ is an antipodal pair,
  then~$\gamma$ uses only one shortcut of~$S'$.

  Set $\dht := \dsk - 2(k-3)\delta(\shortk)$. Since $\ell \leq k - 3$,
  we have $\diam(S') \leq \pi - \dht$.  We consider the
  strip~$\strip(\dht)$.  The middle line $\midline$ of~$\strip(\dht)$
  corresponds to antipodal pairs~$p, q \in C$. By the argument above,
  there is $s \in S'$ such that $\dists(p, q) \leq \pi - \dht$.  It
  follows that the regions~$\reg(s, \dht)$, for $s \in S'$ and $|s|
  \geq \longk$, cover~$\midline$ entirely.  The width of such a region
  is at most $\pi - \longk - \dht$, and so we must have $m \cdot (\pi
  - \longk - \dht) \geq \pi$, where $m$ is the number of shortcuts
  in~$S'$ of length at least~$\longk$.  Calculation shows that $m \geq
  k-1$.  Since $m \leq |S'| = k - \ell \leq k - 1$, this implies $\ell
  = 1$, and $|s_2| \geq \longk$.

  Since $2 \cdot \longk > \pi$, no two shortcuts in $S'$ can be
  combined, so $\strip(\dht)$ must be entirely covered by the
  regions~$\reg(s_i, \dht)$, for $i \in \{2, 3,\dots,k\}$.  The area
  of~$\strip(\dht)$ is $4\dht\pi$. By Lemma~\ref{lem:area-max}, the
  area of~$\reg(s_i, \dht)$ is at most $A(\aht, \dht) = 4\dht(\pi -
  \aht - \dht)$, where $\aht$ is such that $\delta(\aht) = \dht$, and
  so we must have $(k-1) \times (\pi - \aht - \dht) \geq \pi$.
  However, calculation shows that this is false, contradicting our
  assumption that some pair of antipodal points uses more than one
  shortcut.
\end{proof}


\subsection{Optimality of our configurations}
\label{ssec:optimality}

It remains to show that the configurations of
\figurename~\ref{fig:opt_upto5} are optimal even if combinations of
shortcuts can be used.  We prove this in the following lemma:
\begin{lemma}
  \label{lem:upto6-combined}
  Let $S$ be a set of $k$ shortcuts for $k \in \{3, 4, 5\}$ such
  that $\diam(S) \leq \pi - \dsk$.  Then there is no pair of points
  $p, q \in C$ such that the path of length~$d_{S}(p, q)$ uses more
  than one shortcut.
\end{lemma}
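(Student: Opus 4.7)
The plan is to use Lemma~\ref{lem:antipodal} to force every shortcut in~$S$ to be long (length at least~$\muk$), and then observe that two long shortcuts already have combined length exceeding~$\pi-\dsk$, so no shortest path can employ two or more of them.

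First I would establish that every shortcut in~$S$ has length at least~$\muk$. By Lemma~\ref{lem:antipodal}, every antipodal pair $(p,q)$ is served by some single shortcut $s_i\in S$ with $d_{\{s_i\}}(p,q)\leq \pi-\dsk$. In the cylinder~$\strip(\dsk)$ the antipodal pairs constitute exactly the middle line~$\midline$, so $\midline$ must be entirely covered by the regions $\reg(s_i,\dsk)$. By Corollary~\ref{coro:region}, $\reg(s_i,\dsk)\cap\midline$ is nonempty only when $|s_i|\geq\muk$, and in that case has length $2(\pi-|s_i|-\dsk)$. Write $m$ for the number of shortcuts of length at least~$\muk$ and $L$ for their total length. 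Summing the contributions and using $|\midline|=2\pi$ gives
\[
2m\pi-2L-2m\dsk\;\geq\;2\pi,
\qquad\text{i.e.,}\qquad
L\;\leq\;(m-1)\pi-m\dsk.
\]
Combining this with the trivial bound $L\geq m\muk$ rearranges to
\[
m\;\geq\;\frac{\pi}{\pi-\muk-\dsk}.
\]
Plugging in the values of~$\muk$ and~$\dsk$ from Table~\ref{table:diameter}, the right-hand side evaluates to approximately $2.42$, $3.19$, and $4.07$ for $k=3,4,5$ respectively, so in each case $m>k-1$. Since trivially $m\leq k$, we conclude $m=k$: every shortcut in~$S$ has length at least~$\muk$.

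For the second step, suppose for contradiction that the shortest $S$-path between some pair $p,q\in C$ uses two or more shortcuts $s_i,s_j\in S$. Since the path must traverse each chosen shortcut completely, its length is at least $|s_i|+|s_j|\geq 2\muk$. A direct check from Table~\ref{table:diameter} shows $2\muk>\pi-\dsk$ for every $k\in\{3,4,5\}$ (for instance $2\muiii\approx 3.19>2.89\approx\pi-\dsiii$). This contradicts $d_S(p,q)\leq \diam(S)\leq\pi-\dsk$, completing the proof. The whole argument hinges on the $\midline$-covering bound that forces $m=k$; once that is in hand only a numerical comparison remains, so I do not anticipate any real obstacle beyond Lemma~\ref{lem:antipodal} and the bookkeeping of Corollary~\ref{coro:region}.
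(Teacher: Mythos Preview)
Your proof is correct and follows essentially the same route as the paper: use Lemma~\ref{lem:antipodal} to reduce to covering~$\midline$, invoke Corollary~\ref{coro:region} to get $m(\pi-\muk-\dsk)\geq\pi$ and hence $m=k$, and finish by noting two shortcuts of length~$\geq\muk$ already exceed the available budget. The only cosmetic differences are that the paper bounds each region's $\midline$-contribution by $2(\pi-\muk-\dsk)$ directly rather than routing through~$L$, and in the last step uses the slightly stronger $2\muk>\pi$ (so a two-shortcut path is beaten by the bare arc) instead of your $2\muk>\pi-\dsk\geq d_S(p,q)$; both variants work.
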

\begin{proof}
  By Lemma~\ref{lem:antipodal} pairs of antipodal points cannot use
  more than one shortcut.  This implies that the middle line
  $\midline$ of $\strip(\dsk)$ is covered by the regions
  $\reg(s_i,\dsk)$.  The region $\reg(s_i,\dsk)$ intersects $\midline$
  only if $|s_i| \geq \muk$, so by Corollary~\ref{coro:region}
  $\reg(s_i,\dsk)$ covers at most $2(\pi - \muk - \dsk)$
  of~$\midline$.  Calculation shows that $(k-1)(\pi - \muk - \dsk) <
  \pi$, so all $k$~shortcuts have length at least~$\muk$.
  Since $2\muk > \pi$, this implies that no shortcuts can be
  combined.
\end{proof}

Combining
Lemmas~\ref{lem:optimal-no-combination},~\ref{lem:two-optimal},
and~\ref{lem:upto6-combined}, we obtain our first theorem.
\begin{theorem}
  \label{thm:2to5}
  For $k \in \{2, 3, 4, 5\}$ there is a set~$S$ of~$k$ shortcuts that
  achieves $\diam(S) = \pi - \dsk$. This is optimal and the solution
  is unique up to rotation.
\end{theorem}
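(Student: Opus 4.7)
The plan is to observe that Theorem~\ref{thm:2to5} is essentially a packaging of three lemmas already established in this section, so the proof reduces to checking that those lemmas cover all the required claims. Since the existence direction and the combined-shortcut direction have been handled separately, the work is just to glue them together carefully.

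First I would dispose of the existence claim. Lemma~\ref{lem:optimal-no-combination} explicitly constructs, for each $k\in\{2,3,4,5\}$, a set $S$ of $k$ shortcuts of common length $\ask$ whose regions $\reg(s,\dsk)$ are rectangles of width $\pi/k$ and full height $2\dsk$, placed at evenly spaced angular positions so as to tile $\strip(\dsk)$. This gives $\diam(S)=\pi-\dsk$ and establishes the configuration of \figurename~\ref{fig:opt_upto5}, so nothing further needs to be shown for existence.

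For optimality and uniqueness, I would split into the two cases $k=2$ and $k\in\{3,4,5\}$. The case $k=2$ is immediate from Lemma~\ref{lem:two-optimal}: any set of two shortcuts with $\diam(S)\leq\pi-\dskii$ coincides, up to rotation, with the configuration of \figurename~\ref{fig:opt_upto5}, so in particular $\diam(k=2)=\pi-\dskii$ and the minimizer is unique. For $k\in\{3,4,5\}$, I would argue as follows. Suppose $S$ is any set of $k$ shortcuts with $\diam(S)\leq\pi-\dsk$. By Lemma~\ref{lem:upto6-combined}, no shortest path between any pair of points $p,q\in C$ uses more than one shortcut of $S$. This puts $S$ into the hypothesis of the second half of Lemma~\ref{lem:optimal-no-combination}, which concludes that such an $S$ must be the evenly spaced configuration of common length $\ask$, unique up to rotation, and that $\diam(S)=\pi-\dsk$. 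Hence $\diam(k)=\pi-\dsk$ and the minimizer is unique up to rotation for these values of $k$ as well.

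There is no real obstacle here: the heavy lifting has already been done in Lemmas~\ref{lem:antipodal}, \ref{lem:shortlong}, \ref{lem:two-optimal}, and \ref{lem:upto6-combined}. The only thing to be mildly careful about is the logical order: one must first rule out combinations of shortcuts (Lemma~\ref{lem:upto6-combined} for $k\in\{3,4,5\}$ and a direct argument via Lemma~\ref{lem:two-optimal} for $k=2$) before invoking the single-shortcut uniqueness statement in Lemma~\ref{lem:optimal-no-combination}. Once this is spelled out, the theorem follows by a one-line combination of the three lemmas.
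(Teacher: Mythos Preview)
Your proposal is correct and follows exactly the same approach as the paper, which simply states that the theorem is obtained by combining Lemmas~\ref{lem:optimal-no-combination}, \ref{lem:two-optimal}, and~\ref{lem:upto6-combined}. Your write-up is in fact more explicit than the paper's one-line proof, correctly spelling out that Lemma~\ref{lem:upto6-combined} (for $k\in\{3,4,5\}$) and Lemma~\ref{lem:two-optimal} (for $k=2$) feed into the single-shortcut optimality clause of Lemma~\ref{lem:optimal-no-combination}.
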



\section{Six and seven shortcuts}

The configuration of six shortcuts of length~$2$ (that is, all
shortcuts are diameters of the circle) shown in
\figurename~\ref{fig:opt_6} achieves diameter~$\pi - \delta(2) = \pii
+ 1$. Unlike the cases $2 \leq k \leq 5$, this configuration is not
unique---it can be perturbed quite a bit without changing the
diameter.
\begin{figure}[hbt]
  \centerline{\includegraphics{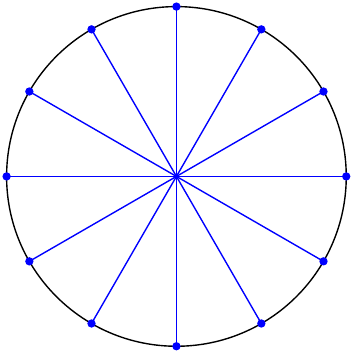}}
  \caption{An optimal configuration of six shortcuts. }
  \label{fig:opt_6}
\end{figure}

It remains to argue that the configuration is indeed optimal, that is,
there is no set $S$ of six shortcuts that achieves $\diam(S) < \pi -
\delta(2)$.  Here, we cannot use a simple area argument as in the case
$k < 6$, as the regions of the optimal solution in $\strip(\dsvi)$
overlap heavily.

In fact, we can show that even if we allow \emph{seven} shortcuts,
there is no set $S$ of shortcuts that achieves $\diam(S) < \pi -
\delta(2)$.  This implies a collapse between the cases of $k=6$ and
$k=7$, that is, $\diam(7) = \diam(6) = \pi - \delta(2)$.
\begin{theorem}
  \label{thm:6to7}
  There is a set~$S$ of six shortcuts that achieves $\diam(S) = \pi -
  \delta(2) = \pii + 1$.  There is no configuration of six or seven
  shortcuts that has diameter smaller than~$\pii+1$.
  Therefore, we have $\diam(7) = \diam(6) = \pii+1$.
\end{theorem}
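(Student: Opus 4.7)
The theorem has two halves: existence of a six-shortcut configuration realizing $\pi - \delta(2)$, and impossibility of strictly improving upon it with six or seven shortcuts. For existence, my plan is to take six diameters whose twelve endpoints are equally spaced on $C$ at angular separation $\pi/6$. For any pair $(p,q)$, parameterizing as $p = \theta$ and $q = \theta + \pi - \beta$ with $|\beta| \leq \delta(2)$ (otherwise the direct arc is already short enough), a case analysis on the displacement $t$ of $p$ from its nearest endpoint shows that routing through a nearby diameter yields a path of length at most $\max(\pi/6 + 2,\, |\beta| + 2) \leq \pi - \delta(2)$, with equality achieved when $|\beta| = \delta(2)$.

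For the lower bound I would argue by contradiction: suppose $|S| = k \leq 7$ and $\diam(S) \leq \pi - \ds$ with $\ds > \delta(2)$. Let $\mu$ and $a_{*}$ be defined by $\delta(\mu) = \ds/2$ and $a_{*} + \delta(a_{*}) = \pi - \ds$, so that $\mu > \muvi$ and $a_{*} < 2$. The strategy is to push the approach of Lemmas~\ref{lem:antipodal} and~\ref{lem:upto6-combined} one step further. First, I generalize Lemma~\ref{lem:shortlong} by letting $\sigma \leq \lambda$ be the two roots of $\delta(x) + \delta(\pi - \ds - x) = \ds/2$; the same Karamata/convexity argument forces any shortcut combination helping an antipodal pair to contain a shortcut of length at least $\lambda$. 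I then iterate the deep-umbra construction from the $k \in \{4, 5, 6\}$ case of Lemma~\ref{lem:antipodal}: an antipodal pair in $\deep(s_k)$ gives $|s_{k-1}| \geq \lambda$, and the overlap of $\umbra(s_{k-1})$ and $\umbra(s_k)$ (forced by $|s_{k-1}| + |s_k| > \pi$) then produces an antipodal pair in $\deep(s_{k-1}) \cap \deep(s_k)$ giving $|s_{k-2}| \geq \lambda$. This rules out antipodal-pair combinations for $k \leq 7$.

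With antipodal pairs served only by individual shortcuts, the middle line $\midline$ of $\strip(\ds)$ must be covered by the regions $\reg(s_i,\ds)$; by Corollary~\ref{coro:region} only shortcuts of length at least $\mu$ contribute, each by at most $2(\pi - \mu - \ds)$, while the upper boundary $\boundary$ must be covered by shortcuts of length at most $a_{*}$, each contributing at most $\pi - \ds$. Splitting the seven shortcuts into length buckets $(0, \mu)$, $[\mu, a_{*}]$, and $(a_{*}, 2]$ and combining the midline and boundary covering inequalities should yield a constraint that contradicts $\mu + \ds > \muvi + \delta(2)$. The main obstacle is this last step: the midline-only bound $7(\pi - \mu - \ds) \geq \pi$ leaves slack at $\ds$ just above $\delta(2)$, so the contradiction has to come from the combined midline/boundary analysis across all bucket distributions, and borderline configurations (for instance, three long shortcuts, two medium, two short) may need to be excluded either by an additional packing argument at intermediate horizontal lines in $\strip(\ds)$ or by a direct numerical verification from the companion Python script.
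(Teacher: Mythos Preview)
Your existence argument (six equally spaced diameters) matches the paper's.

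For the lower bound there are two genuine gaps. First, the step ``iterate the deep-umbra construction to get $|s_{k-2}| \geq \lambda$; this rules out antipodal-pair combinations for $k \leq 7$'' is a non sequitur: knowing that three shortcuts are long says nothing about whether the remaining (up to four) shortcuts can combine with each other or with a long one to serve some antipodal pair. The paper proves the antipodal lemma (Lemma~\ref{lem:antipodal}) only for $k \leq 6$, and even there the argument continues well past the point where you stop: after finding three long shortcuts it bounds the number~$\ell$ of short ones, removes them to form~$S'$, applies a midline-covering bound to~$S'$ to force $\ell = 1$, and finishes with an area bound on~$\strip(\hat\delta)$. That final area inequality is exactly what fails numerically when one tries to push it to $k=7$, and the paper never attempts an analogue of Lemma~\ref{lem:antipodal} for seven shortcuts.

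Second, the slack you acknowledge in the midline-plus-boundary bucketing is real and is precisely why the paper's seven-shortcut proof looks nothing like your outline. After establishing $|s_3| \geq \lambda_6$ (your iteration step, made precise in Lemma~\ref{lem:Sp6}), the paper splits on whether $|s_1| \leq \sigma_6$. In the ``short $s_1$'' case it proves an auxiliary lemma (Lemma~\ref{lem:s2dht}) forcing $|s_2| > 1.999$, and then obtains the contradiction not from a global boundary count but from a targeted analysis of specific $\theta$-values on~$\boundary$ (Lemma~\ref{lem:thetaset}) that cannot use~$s_1$. In the ``no short shortcut'' case it bounds $\delta(s_1)+\delta(s_2) < 0.2$ (Lemma~\ref{lem:bound-combining-edges}) and then argues coverage failure not on~$\midline$ or~$\boundary$ but on the intermediate horizontal line $\xi = \pi/2 - 1.4$. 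So your instinct about intermediate lines is right, but what is needed is a precise case split with a carefully chosen height, not a bucketed packing inequality.
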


\subsection{A short proof for six shortcuts\ldots}

Since the proof for seven shortcuts is quite long and rather
technical, we first give a short proof for the case of six shortcuts
(even though this is of course implied by the proof for seven
shortcuts).
\begin{proof}[Proof of Theorem~\ref{thm:6to7} for six shortcuts]
  Let $S = \{s_1, \dots, s_6\}$ with $|s_1| \leq \dots \leq |s_6|$,
  and assume that $\diam(S) \leq \pi - \ds$ with $\ds > \dsvi$.

  By Lemma~\ref{lem:antipodal}, this implies that no antipodal pair
  uses more than one shortcut.  This means that the middle
  line~$\midline$ of $\strip(\ds)$ is covered by the
  regions~$\reg(s_{i}, \ds)$.  The region $\reg(s_i,\ds)$
  intersects $\midline$ only if $|s_i| \geq \muvi$, so by
  Corollary~\ref{coro:region} $\reg(s_i,\ds)$ covers at most $2(\pi
  - \muvi - \dsvi)$ of~$\midline$.  Calculation shows that $4(\pi -
  \muvi - \dsvi) < \pi$, so at least five shortcuts have length at
  least~$\muvi$, that is $|s_2| \geq \muvi$.

  If no pair of points uses more than one shortcut, then the
  strip~$\strip(\ds)$ must be covered by the six regions~$\reg(s_i,
  \ds)$. In particular, the upper boundary~$\boundary$ of
  $\strip(\ds)$ is covered.  Since $\ds > \dsvi = \delta(2) \geq
  \delta(a)$ for $a \in [0,2]$, by Corollary~\ref{coro:region}
  region~$\reg(s_{i}, \ds)$ covers at most~$\pi - |s_i| - \ds$
  of~$\boundary$.  This leads to a contradiction:
  \[
  \sum_{i=1}^{6}(\pi - |s_i| - \ds) \leq (\pi - \dsvi) + 5(\pi
  - \muvi - \dsvi) = 6\pi - 6\dsvi - 5\muvi \approx 6.0106 < 2\pi.
  \]

  It follows that some pair of points uses two shortcuts.  Since
  $2\muvi > \pi$, this can only be a combination involving~$s_1$, so
  we must have $|s_1| + |s_2| \leq \pi - \ds$. From $|s_2| \geq \muvi$
  we get $|s_1| \leq \pi - \ds - \muvi$, and so $\delta(s_1) \leq
  \delta(\pi - \dsvi - \muvi) < 0.008$.  We set $S' = \{s_2, \dots,
  s_6\}$, and have $\diam(S') \leq \pi - \dht$, where $\dht = \dsvi -
  0.016$ by Observation~\ref{obs:deltas}.  Since no two shortcuts
  in~$S'$ can be combined, the strip~$\strip(\dht)$ of area $4\dht\pi$
  is covered by the regions~$\reg(s_i, \dht)$, for $2 \leq i \leq 6$.
  By Lemma~\ref{lem:area-max}, the area of~$\reg(s_i, \dht)$ is at
  most $A(\aht, \dht) = 4\dht(\pi - \aht - \dht)$, where $\aht$ is
  such that $\delta(\aht) = \dht$. However $5 (\pi - \aht - \dht)
  \approx 2.9353 < \pi$, another contradiction.
\end{proof}


\subsection{\ldots and a long proof for seven}

We let $S = \{s_1, s_2, \dots, s_7\}$ with $|s_1| \leq |s_2| \leq
\dots \leq |s_7|$, set $\ds = \dsvi + \eps = \delta(2) + \eps$ for some
small $\eps > 0$, and assume that $\diam(S) \leq \pi - \ds$.  We will
show that this leads to a contradiction.

We will use the following short result.
\begin{lemma}
  \label{lem:longvi}
  A region $\reg(s, \dht)$ reaches the middle line of~$\strip(\dht)$
  for $\dht = \delta(2) - 2\delta(\shortvi)$ if and only if~$|s| \geq
  \longvi$.
\end{lemma}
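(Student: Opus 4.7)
The plan is to reduce the statement directly to Corollary~\ref{coro:region} and then recognize $\dht/2$ as $\delta(\longvi)$ using the defining equation of $\longvi$ from Lemma~\ref{lem:shortlong}.

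First, I would recall from Corollary~\ref{coro:region} that for any shortcut $s$ of length $a \in (0,2]$ and any target slack $\dht \in [0, \pi - 2]$, the intersection $\midline \cap \reg(s, \dht)$ is non-empty precisely when $\delta(a) \geq \dht/2$ (in which case it has length $2(\pi - a - \dht)$, but for this lemma we only need the non-emptiness criterion). So the lemma amounts to showing that, with $\dht = \delta(2) - 2\delta(\shortvi)$, the inequality $\delta(|s|) \geq \dht/2$ is equivalent to $|s| \geq \longvi$.

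Next, I would compute
\[
\dht/2 \;=\; \tfrac{1}{2}\delta(2) - \delta(\shortvi) \;=\; \dsvi/2 - \delta(\shortvi),
\]
using $\delta(2) = \pii - 1 = \dsvi$ (recall $\asvi = 2$, so $\dsvi = \delta(\asvi) = \delta(2)$). Lemma~\ref{lem:shortlong} (specialized to $k=6$) says that $\shortvi$ and $\longvi$ are the two solutions of $\delta(x) + \delta(\pi - \dsvi - x) = \dsvi/2$, and in particular $\delta(\longvi) + \delta(\shortvi) = \dsvi/2$. Substituting gives $\delta(\longvi) = \dsvi/2 - \delta(\shortvi) = \dht/2$.

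Finally, since $\delta$ is strictly increasing on $[0,2]$, the condition $\delta(|s|) \geq \dht/2 = \delta(\longvi)$ is equivalent to $|s| \geq \longvi$, which is precisely the claim. There is no real obstacle here; the only thing to check is that $\dht$ lies in the admissible range $[0, \pi-2]$ for Corollary~\ref{coro:region} to apply, which follows from $\dht < \dsvi = \delta(2) = \pii - 1 < \pi - 2$ and $\dht > 0$ (since $\shortvi < \longvi$ forces $\delta(\shortvi) < \dsvi/2$, hence $\dht > 0$). The lemma is essentially a bookkeeping consequence of how $\longvi$ was defined.
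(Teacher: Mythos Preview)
Your proof is correct and follows essentially the same approach as the paper: both invoke Corollary~\ref{coro:region} to reduce reaching the middle line to the inequality $\delta(|s|) \geq \dht/2$, then use the defining relation $\delta(\shortvi) + \delta(\longvi) = \dsvi/2$ from Lemma~\ref{lem:shortlong} to identify $\dht/2$ with $\delta(\longvi)$, and conclude via the monotonicity of~$\delta$. Your version adds the explicit verification that $\dht$ lies in the admissible range, which is a nice touch but not present in the paper's shorter proof.
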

\begin{proof}
  Recall from Lemma~\ref{lem:shortlong} that $\shortvi$ and $\longvi$
  satisfy $\delta(\shortvi) + \delta(\longvi) = \delta(2)/2$. Region
  $\reg(s, \dht)$ reaches the middle line if and only if $2\delta(s) \geq \dht =
  \delta(2) - 2\delta(\shortvi)$ by Corollary~\ref{coro:region}
  or, equivalently, $\delta(s) \geq \delta(2)/2 - \delta(\shortvi)$.
  This is equivalent to~$|s| \geq \longvi$.
\end{proof}

We start as in the proof of Lemma~\ref{lem:antipodal} for $k \in
\{4,5,6\}$ and argue that at least three of the shortcuts in~$S$ have
length at least~$\longvi$, that is, $|s_5| \geq \longvi$.  Let $\ell$
denote the number of short shortcuts (of length at most~$\shortvi$).
We have $0 \leq \ell \leq 4$, and we let $S' = \{s_{\ell+1}, \ldots,
s_7\}$ be the set of $7-\ell$ shortcuts that are not short. Observe
that~$\diam(S') \leq \pi - \dht(\ell)$, where $\dht(\ell) = \ds -
\ell\cdot2\delta(\shortvi)$.  Again, we are here using the same
argument as in the proof of Lemma~\ref{lem:antipodal} for $k \in
\{4,5,6\}$.
\begin{lemma}
  \label{lem:Sp6}
  We have $|S'| \geq 6$, $\diam(S') \leq \pi - \dht(1) = \pi -
  (\ds-2\delta(\shortvi))$, and~$|s_3| \geq \longvi$.
\end{lemma}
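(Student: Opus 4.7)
The plan is to combine the deep-umbra chain already used in the proof of Lemma~\ref{lem:antipodal} for $k\in\{4,5,6\}$ with two separate covering arguments on the strip $\strip(\dht(\ell))$. Throughout, $\ell$ denotes the number of short shortcuts in~$S$. As a preliminary step, I would re-run the deep-umbra chain in the present setting: Lemma~\ref{lem:shortlong} applies with $k=6$ because $\diam(S)\leq\pi-\ds<\pi-\dsvi$, so an antipodal pair in $\deep(s_7)$ forces $|s_6|\geq\longvi$, and then, because $2\longvi>\pi$, an antipodal pair in the intersection $\umbra(s_6)\cap\umbra(s_7)$ (non-empty by Lemma~\ref{lem:deep}) forces $|s_5|\geq\longvi$. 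From this point on we may assume $|s_5|,|s_6|,|s_7|\geq\longvi$.

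For the first assertion $|S'|\geq 6$ I would argue by contradiction. Suppose $\ell\geq 2$, so $|S'|\leq 5$. Replacing the short shortcuts by circle arcs as in Observation~\ref{obs:deltas} gives $\diam(S')\leq\pi-\dht(\ell)\leq\pi-\dht(2)$. Since every pair of shortcuts in $S'$ has combined length at least $2\longvi>\pi$, no shortest $S'$-path uses two shortcuts, and so $\strip(\dht(2))$ must be covered by the regions $\reg(s,\dht(2))$, $s\in S'$. By Lemma~\ref{lem:area-max} each such region has area at most $A(\aht(2),\dht(2))=4\dht(2)(\pi-\aht(2)-\dht(2))$, where $\delta(\aht(2))=\dht(2)$, while $\strip(\dht(2))$ has area $4\dht(2)\pi$. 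Covering would therefore require $|S'|\cdot(\pi-\aht(2)-\dht(2))\geq\pi$, and a numerical check in the style of the short proof of Theorem~\ref{thm:6to7} above shows this to be violated for $|S'|\leq 5$. This contradiction establishes $\ell\leq 1$, from which the second assertion $\diam(S')\leq\pi-\dht(1)$ is immediate via the same replace argument.

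For $|s_3|\geq\longvi$ I would again argue by contradiction, assuming $|s_3|<\longvi$ so that at most four shortcuts in $S'$ have length at least $\longvi$. Consider the middle line $\midline$ of the strip $\strip(\dht(1))$; it corresponds to antipodal pairs. Because $2\longvi>\pi$ no pair of $S'$-shortcuts can be combined, so every antipodal pair is handled by a single $S'$-shortcut, and by Lemma~\ref{lem:longvi} only regions $\reg(s,\dht(1))$ with $|s|\geq\longvi$ can meet $\midline$. By Corollary~\ref{coro:region} each such region contributes at most $2(\pi-\longvi-\dht(1))$ to $\midline$, so the four available regions cover at most $8(\pi-\longvi-\dht(1))$ of the total length~$2\pi$ of $\midline$; evaluating $\pi-\longvi-\dht(1)$ numerically shows this is strictly less than $\pi/4$, so the covering requirement fails.

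The principal obstacle will be that both numerical inequalities are reasonably tight, so one must use the refined bounds of Lemma~\ref{lem:area-max} and Corollary~\ref{coro:region} rather than any cruder estimate: the area bound is needed for $|S'|\geq 6$ because on its own the middle-line bound does not close the case, while the middle-line bound is needed for $|s_3|\geq\longvi$ because there the area bound falls short. A minor technicality is that $\dht(1)$ exceeds the value $\delta(2)-2\delta(\shortvi)$ appearing in Lemma~\ref{lem:longvi} by $\eps$, but since the governing inequalities are strict at $\eps=0$ they remain valid for sufficiently small $\eps$.
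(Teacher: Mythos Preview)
Your proposal has a recurring gap: in both the argument for $|S'|\geq 6$ and the argument for $|s_3|\geq\longvi$, you assert that no two shortcuts in~$S'$ can be combined ``because $2\longvi>\pi$.'' This would be valid only if every shortcut in~$S'$ had length at least~$\longvi$, but that is precisely what you have \emph{not} yet established. After the deep-umbra chain you know only $|s_5|,|s_6|,|s_7|\geq\longvi$. When $\ell=2$, for instance, $S'=\{s_3,\dots,s_7\}$ and $|s_3|,|s_4|$ are merely larger than~$\shortvi$; then $|s_3|+|s_4|$ can be as small as~$2\shortvi\approx 1.39$, well below $\pi-\dht(2)\approx 2.6$, so a combined path is not excluded by length alone. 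Consequently the claim that $\strip(\dht(2))$ is covered by single-shortcut regions is unjustified, and your area argument does not go through. (There is also a sign slip: from $\ell\geq 2$ you get $\dht(\ell)\leq\dht(2)$, hence $\pi-\dht(\ell)\geq\pi-\dht(2)$, the reverse of what you wrote.)

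The paper avoids this trap by never asserting that arbitrary pairs in~$S'$ are incombinable. Instead it restricts attention to the middle line (antipodal pairs) and invokes Lemma~\ref{lem:shortlong} on the \emph{original} set~$S$: for an antipodal pair the shortest $S$-path uses one shortcut of length~$\geq\longvi$ and others of total length~$\leq\shortvi$, so at most one shortcut from~$S'$ is used. Removing the short ones then shows that single-shortcut regions $\reg(s,\dht(\ell))$, $s\in S'$, cover~$\midline$. Running this at $\dht(4)$ yields at least five shortcuts of length~$>1.849$, hence $\ell\leq 2$; then $\diam(S')\leq\pi-\dht(2)<\pi-\ds_5$, and Theorem~\ref{thm:2to5} (which already handles combinations) forces $|S'|\geq 6$. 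The same middle-line argument, now at~$\dht(1)$ and using Lemma~\ref{lem:longvi}, gives $|s_3|\geq\longvi$. Your middle-line step for the third assertion is the right idea, but the justification must come from Lemma~\ref{lem:shortlong}, not from~$2\longvi>\pi$.
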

\begin{proof}
   Consider the strip $\strip(\dht(4))$ and its middle
   line~$\midline$. A point on the middle line corresponds to an
   antipodal pair, and by Lemma~\ref{lem:shortlong} the shortest path
   for an antipodal pair can use at most one shortcut of length larger
   than~$\shortvi$.  It follows that the regions $\reg(s, \dht(4))$
   for $s \in S'$ must cover~$\midline$.  The region $\reg(s,\dht(4))$
   only reaches~$\midline$ if $2\delta(s) \geq \dht(4)$, which implies
   $|s| > 1.849$.  By Corollary~\ref{coro:region}, the width of the
   two rectangles of such a region is at most $\pi - 1.849 -
   \dht(4)$. Since $4 \times (\pi - 1.849 - \dht(4)) < \pi$, there
   must be at least five shortcuts of length at least~$1.849 >
   \shortvi$, and so~$|S'| \geq 5$ and therefore $\ell \leq 2$.  This
   implies that $\diam(S') \leq \pi - \dht(2) < \pi - \ds_5$.
   Theorem~\ref{thm:2to5} now implies~$|S'| \geq 6$.  This in turn
   means $\ell \leq 1$ and therefore $\diam(S') \leq \pi -
   \dht(1)$. We now redo the argument above: The region
   $\reg(s,\dht(1))$ only reaches~$\midline$ if $s \geq \longvi$ by
   Lemma~\ref{lem:longvi}.  The width of the two rectangles of such a
   region is at most $\pi - \longvi - \dht(1)$. Since $4 \times (\pi -
   \longvi - \dht(1)) < \pi$, there must be at least five shortcuts of
   length at least $\longvi$, that is, $|s_3| \geq \longvi$.
\end{proof}

We will need the following lemma about the \emph{six} shortcuts $s_2,
s_3, \dots, s_7$:
\begin{lemma}
  \label{lem:s2dht}
  If $|s_2| > 1.7$ and $\diam(\{s_2,s_3,\dots,s_7\}) \leq \pi - 0.54$,
  then $|s_2| > 1.999$.
\end{lemma}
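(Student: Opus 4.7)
The plan is to argue by contradiction: suppose $|s_2| \leq 1.999$. Since $|s_i| \geq |s_2| > 1.7$ for all $i \in \{2,\ldots,7\}$, any two of these shortcuts have total length exceeding $3.4 > \pi > \pi - \ds$ with $\ds = 0.54$. Hence the pure shortcut portion of any path combining two of these shortcuts already exceeds $\pi - \ds$, so no shortest path can use two of them. Consequently, $\diam(\{s_2,\ldots,s_7\}) \leq \pi - \ds$ forces the six regions $\reg(s_i, \ds)$ to cover $\strip(\ds)$ entirely.

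I would then classify the six shortcuts by length. Let $\as$ satisfy $\delta(\as) = \ds$ (numerically $\as \approx 1.9990 > 1.999$) and let $\mu$ satisfy $\delta(\mu) = \ds/2$ ($\mu \approx 1.866$). By Lemma~\ref{lem:region}, $s_i$ is \emph{long} when $|s_i| \geq \as$, in which case $\reg(s_i,\ds)$ spans the full height of $\strip(\ds)$; \emph{medium} when $\mu \leq |s_i| < \as$; and \emph{short} when $|s_i| < \mu$. Under our assumption $|s_2| \leq 1.999 < \as$, $s_2$ is non-long. Let $L, M, S$ count the three classes, with $L + M + S = 6$; the goal is to rule out $L < 6$.

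The main tool is Corollary~\ref{coro:region}, which controls how each region meets the middle line $\midline$ and the upper boundary $\boundary$ of $\strip(\ds)$. Writing $c_i := \pi - |s_i| - \ds$, middle-line coverage requires $\sum_{i:\,|s_i|\geq\mu} 2 c_i \geq 2\pi$ and boundary coverage requires $\sum_{i:\,|s_i|\geq\as} 2 c_i + \sum_{i:\,|s_i|<\as} c_i \geq 2\pi$. Combined with the per-class upper bounds $c_i \leq \pi - \as - \ds$ (long), $c_i \leq \pi - \mu - \ds$ (medium), $c_i \leq \pi - 1.7 - \ds$ (short), a case analysis on $(L, M, S)$ rules out every configuration with $L < 6$ by direct numerical comparison.

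The hard part will be the marginal subcase $L = 4$, $M = 2$, $S = 0$, where plugging in the maximum admissible $c_i$ values leaves only a tiny numerical slack on the boundary inequality. I plan to close this subcase by refining with an area bound from Lemma~\ref{lem:area-max}: the area of a medium region is $A(|s_i|,\ds) = 4\delta(|s_i|)(\pi - |s_i| - \ds) < A(\as,\ds) = 4\ds(\pi - \as - \ds)$, so requiring the six regions to cover the strip area $4\pi\ds$ yields a contradiction, at least when the medium shortcuts are bounded away from $\as$. The remaining narrow band where $|s_i|$ approaches $\as$ can be treated by coverage on a carefully chosen intermediate horizontal level $\xi^\star \in (0, \ds)$, where only one of a medium region's two rectangles contributes (so the medium contribution drops from $2c_i$ to $c_i$, matching the boundary pattern and amplifying the slack into a contradiction). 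As elsewhere in this section, a numerical script would verify the tight subcases. Once $L = 6$ is established, $|s_2| \geq \as > 1.999$.
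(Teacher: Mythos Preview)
Your opening is fine: $|s_i|+|s_j|>3.4>\pi-0.54$ rules out combinations, so the six regions must cover~$\strip(0.54)$. The gap is in the covering analysis. All the constraints you propose---middle line, boundary, area, and intermediate horizontal levels---are of the form ``sum of contributing widths $\geq 2\pi$'', which is only a \emph{necessary} condition for covering a horizontal line (the intervals may overlap). Such constraints cannot separate a medium shortcut that is just below~$\as$ from a long one. Concretely, take $L=5$, $M=1$, $S=0$ with the single medium shortcut at $|s_2|=1.999$ (so $\delta(s_2)\approx 0.5397<0.54$). Then each long $c_i$ lies in $[\pi-2-0.54,\ \pi-\as-0.54]\approx[0.6016,0.6026]$, and $c_2\approx 0.6026$. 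Your boundary bound gives $2A+c_2\leq 11\times 0.6026\approx 6.63>2\pi$, the middle line gives $A+c_2\leq 3.62>\pi$, the area bound gives $0.54A+\delta(s_2)c_2\approx 1.95>0.54\pi\approx 1.70$, and any intermediate level $\xi^\star\in(2\delta(s_2)-0.54,\ 0.54]$ reproduces exactly the boundary inequality. None of your constraints excludes this configuration, so the case analysis does not force $L=6$. (You also misidentify the hard case: $L=5$, $M=1$ has much more slack than your $L=4$, $M=2$.)

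The paper closes this gap with an argument that is orthogonal to horizontal-line bookkeeping. First, using the boundary sum (and implicitly $|s_3|\geq\longvi$ from the surrounding context), it shows $|s_4|\geq\aht$, where $\delta(\aht)=0.54$. Then it looks at \emph{vertical} slices: the ten rectangles comprising $\reg(s_3,0.54),\dots,\reg(s_7,0.54)$ project to a set of $\theta$-values of total length at most $2\times 0.727+8\times 0.603<2\pi$, so some vertical segment $\{\theta_0\}\times[-0.54,0.54]$ misses all of them. That entire segment must then lie in $\reg(s_2,0.54)$, which by Lemma~\ref{lem:region} forces $\reg(s_2,0.54)$ to have full height, i.e.\ $\delta(s_2)\geq 0.54$, hence $|s_2|\geq\aht>1.999$. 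This ``leftover vertical slice must be fully covered by one region'' step is the missing idea; horizontal-level sums alone cannot replace it.
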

\begin{proof}
  Since $|s_2| + |s_3| > 1.7 + \longvi > \pi$, no combinations of
  the shortcuts $s_2, s_3, \dots, s_7$ are possible, and so the six
  regions $\reg(s_2, 0.54),\dots,\reg(s_7, 0.54)$ must cover the upper
  boundary~$\boundary$ of~$\strip(0.54)$.  Let $\aht$ be such that
  $\delta(\aht) = 0.54$.  Since $\delta(1.999) < 0.54$, we have $\aht
  > 1.999$. Lemma~\ref{lem:region} and Corollary~\ref{coro:region}
  imply the following: If $|s_i| \geq \aht$, then $\reg(s_i, 0.54)$
  covers \emph{two} segments of $\boundary$ of length at most~$\pi -
  1.999 - 0.54 < 0.603$; if $\longvi \leq |s_i| < \aht$, then
  $\reg(s_i, 0.54)$ covers \emph{one} segment of $\boundary$ of length
  at most~$\pi - \longvi - 0.54 < 0.727$; and if $1.7 \leq |s_i| <
  \longvi$, then $\reg(s_i, 0.54)$ covers \emph{one} segment of
  $\boundary$ of length at most~$\pi - 1.7 - 0.54 < 0.902$.

  Assume that $|s_4| < \aht$.  Then the coverage of the upper
  boundary~$\boundary$ of~$\strip(0.54)$ by the six regions is at most
  $0.902 + 2 \times 0.727 + 3 \times 2 \times 0.603 < 2\pi$, a
  contradiction.  So we have~$|s_4| \geq \aht$.

  Therefore the five regions $\reg(s_3, 0.54),\dots,\reg(s_7, 0.54)$
  consist of ten rectangles of total width at most $2 \times 0.727 + 8
  \times 0.603 < 2\pi$. This implies that there must be a~$\theta$
  such that the segment $\{(\theta, \xi)\mid -0.54 \leq \xi \leq
  0.54\}$ is disjoint from these five regions.  The segment must
  therefore be contained in $\reg(s_2, 0.54)$. This is only possible
  if $\reg(s_2, 0.54)$ covers the entire height of~$\strip(0.54)$, or
  equivalently, if $\delta(s_2) \geq 0.54$ by Lemma~\ref{lem:region}.
  This implies that~$|s_2| \geq \aht > 1.999$.
\end{proof}

We now distinguish two cases, based on the length of~$s_1$.


\subsubsection{A short shortcut exists}
\label{sssec:short}

We first assume that $|s_1| \leq \shortvi$, so that~$|S'| = 6$.
By Lemma~\ref{lem:Sp6} we have $\diam(S') \leq \pi - \dht$, where
$\dht = \dht(1) = \ds - 2\delta(\shortvi)$.
\begin{lemma}
  \label{lem:s2longvi}
  $|s_2| \geq \longvi$.
\end{lemma}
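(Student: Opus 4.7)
The plan is to proceed by contradiction, assuming $|s_2| < \longvi$. By Lemma~\ref{lem:longvi} (with the middle-line threshold adjusted for the small $\eps$ offset in $\dht(1) = \delta(2) - 2\delta(\shortvi) + \eps$), $\reg(s_2, \dht(1))$ does not intersect the middle line $\midline$ of $\strip(\dht(1))$. Moreover, for $|s_2|$ not too close to $\shortvi$, any two shortcuts $s_i, s_j \in S'$ satisfy $|s_i| + |s_j| > \pi - \dht(1)$, so no useful combination of two $S'$-shortcuts exists and the six regions $\reg(s_i, \dht(1))$ for $i \in \{2,\ldots,7\}$ must individually cover $\strip(\dht(1))$. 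The very narrow edge range where $|s_2| - \shortvi < 2\delta(\shortvi) - \eps$ admits possible combinations of $s_2$ with an $s_j$ of length close to $\longvi$, but the corresponding combination regions are tiny and can be handled separately. Covering $\midline$ by the five regions with $i \in \{3,\ldots,7\}$ then gives $\sum_{i=3}^{7} r_i \geq \pi$, where $r_i := \pi - |s_i| - \dht(1)$.

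Now let $\aht$ satisfy $\delta(\aht) = \dht(1)$ and let $k_1$ count those $s_i$ ($i \geq 3$) with $|s_i| \geq \aht$. By Corollary~\ref{coro:region}, the upper boundary $\boundary$ of length $2\pi$ receives at most $r_2$ from $\reg(s_2, \dht(1))$, at most $2 r_i$ from each of the $k_1$ case~1 regions (full height), and at most $r_i$ from each remaining case~2 region. Combining the $\geq 2\pi$ boundary requirement with the middle-line constraint and the individual upper bounds $r_i \leq \pi - \aht - \dht(1) \approx 0.586$ (case~1), $r_i \leq \pi - \longvi - \dht(1) \approx 0.711$ (case~2 long), and $r_2 \leq \pi - \shortvi - \dht(1) \approx 1.890$, a short numerical calculation forces $k_1 \in \{2,3\}$ and bounds $|s_2|$ above by roughly $1.241$, so that $\delta(s_2)$ is at most about $0.05$.

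The main obstacle is obtaining the final contradiction from this constrained configuration. I plan to evaluate coverage at the horizontal line $\xi^* = \dht(1) - 2\delta(s_2) - \eta$ for small $\eta > 0$, just below the bottom edge of $\reg(s_2, \dht(1))$'s top rectangle, so that $\reg(s_2, \dht(1))$ contributes nothing at height $\xi^*$. Each case~1 region contributes the full $2 r_i$, but a case-2 long region $\reg(s_i, \dht(1))$ contributes $2 r_i$ only if its bottom rectangle also reaches $\xi^*$, which by Lemma~\ref{lem:region} requires $\delta(s_i) + \delta(s_2) \geq \dht(1)$. Since $\delta(s_2)$ is tiny, this condition forces $|s_i|$ into a very narrow window near $\aht$ where $r_i \approx 0.586$; combined with the middle-line inequality $\sum r_i \geq \pi$, this prevents more than one case-2 long shortcut from having both its rectangles contribute at $\xi^*$. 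A direct numerical calculation in both cases $k_1 = 2$ and $k_1 = 3$ then shows the total coverage at $\xi^*$ is at most about $4.94$, strictly less than $2\pi$, yielding the desired contradiction.
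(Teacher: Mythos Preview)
Your approach differs substantially from the paper's. After the common first step (if $|s_2| < \longvi$ then $\reg(s_2,\dht)$ misses the middle line $\midline$, so the five regions $\reg(s_3,\dht),\dots,\reg(s_7,\dht)$ must already cover~$\midline$), the paper proceeds with a direct \emph{area} argument rather than further line-by-line coverage. From the middle-line inequality $\sum_{i=3}^{7}(\pi-|s_i|-\dht)\geq\pi$ it extracts the single consequence $|s_3|\leq\tfrac{4}{5}\pi-\dht$, and then observes that the six regions $\reg(s_i,\dht)$, $i=2,\dots,7$, must cover the whole strip $\strip(\dht)$ of area $4\dht\pi$. Bounding each region's area via Lemma~\ref{lem:area-max} --- namely $A(|s_2|,\dht)\leq A(\longvi,\dht)$, $A(|s_3|,\dht)\leq A(\tfrac{4}{5}\pi-\dht,\dht)$, and $A(|s_i|,\dht)<4\dht(\pi-1.999-\dht)$ for $i\geq 4$ --- yields total area $\approx 6.9765 < 6.9862 \approx 4\dht\pi$, a contradiction in one stroke.

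Your route --- first pinning down $k_1\in\{2,3\}$ and an upper bound on~$|s_2|$ via the boundary~$\boundary$, then evaluating coverage at a custom height~$\xi^\ast$ --- does appear to go through numerically, but it replaces a two-line area comparison with a multi-stage case analysis. Incidentally, your intermediate claim that the middle-line inequality ``prevents more than one case-2 long shortcut from having both rectangles contribute at~$\xi^\ast$'' is not actually needed: even without that restriction, the maximal coverage at~$\xi^\ast$ stays below~$2\pi$ in both cases $k_1=2,3$. Both proofs share the same unaddressed edge case concerning a combination of $s_2$ with some $s_j$ when $|s_2|-\shortvi<2\delta(\shortvi)$; the paper glosses over it too, so you are not worse off there. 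The main takeaway is that once the middle-line constraint has produced the bound on~$|s_3|$, a single global area count is both simpler and tighter than slicing the strip at several heights.
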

\begin{proof}
  Assume for a contradiction that $|s_2| < \longvi$. By
  Lemma~\ref{lem:longvi}, $\reg(s_2, \dht)$ does not reach the middle
  line~$\midline$ of strip~$\strip(\dht)$, and so the regions of the
  remaining five shortcuts~$s_3, \ldots , s_7$ in~$S'$ must
  cover~$\midline$.  By Corollary~\ref{coro:region}
  we have
  \[
  \sum_{i = 3}^{7} 2(\pi - |s_i| - \dht) \geq 2\pi
  \quad \text{or, equivalently,} \quad
  \sum_{i=3}^{7} |s_i| \leq 4\pi - 5\dht,
  \]
  which implies that the shortest of these five segments has length
  $|s_3| \leq (4\pi - 5\dht)/5 = \frac 45 \pi - \dht$.

  On the other hand, the six regions $\reg(s_2, \dht), \ldots ,
  \reg(s_7, \dht)$ must cover~$\strip(\dht)$ entirely.  The
  strip~$\strip(\dht)$ has area~$4\dht\pi \approx 6.9862$. By
  Lemma~\ref{lem:area-max} and using~$\delta(1.999) < \dht$, the total
  area of the six regions is
  \begin{align*}
  \sum_{i=2}^{7} A(|s_i|, \dht)
  & = A(|s_2|, \dht) + A(|s_3|, \dht) + \sum_{i=4}^{7} A(|s_i|, \dht) \\
  & \leq
  A(\longvi,\dht) +
  A(\frac 45 \pi - \dht, \dht) + 4 A(1.999, \dht)
  \approx 6.9765 < 6.9862 \approx 4 \dht\pi,
  \end{align*}
  a contradiction.
\end{proof}
Since $\longvi > 1.7$, we can now apply Lemma~\ref{lem:s2dht}, and
obtain $|s_2| > 1.999$, implying that \emph{all} six shortcuts in~$S'$
have length larger than~$1.999$.

Finally we return to the full set~$S$ with diameter~$\diam(S) \leq \pi
- \ds$.  We rotate the configuration such that the midpoint of the
inner umbra of~$s_1$ is at coordinate zero.
\begin{lemma}
  \label{lem:thetaset}
  The pairs corresponding to configurations~$(\theta, \ds)$ on the
  upper boundary~$\boundary$ of~$\strip(\ds)$ with $\theta$ in the
  following set
  \[
  \big\{ \pi - \dsii \big\} \cup
  \big[ \pi + 0.4, 2\pi - 0.4 \big] \cup
  \big\{ \dsii \big\}
  \]
  cannot use shortcut~$s_1$.
\end{lemma}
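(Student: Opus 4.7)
My plan is to handle the three components of the stated $\theta$-set by two different arguments. The two isolated points are easy: computing the corresponding pair $(p,q)$ gives $p=0$ at $(\theta,\xi)=(\ds/2,\ds)$ and $q=0$ at $(\theta,\xi)=(\pi-\ds/2,\ds)$. Because the configuration is rotated so that the midpoint of the inner umbra of $s_1$ sits at $0$, the symmetric arc $\arc{u_2 v_2}$ constructed in the proof of Lemma~\ref{lem:deep} contains $0$, so $0\in\deep(s_1)$. Observation~\ref{obs:deep} then rules out $s_1$ from the shortest path for each of these two pairs.

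For the interval, I would parameterise $\theta=\pi+t$ with $t\in[0.4,\,\pi-0.4]$, so that $p=\pi+t-\ds/2$ and $q\equiv t+\ds/2\pmod{2\pi}$. The clockwise arc from $p$ to $q$ through $\pi$ realises $d(p,q)=\pi-\ds$, while the opposite counter-clockwise arc, of length $\pi+\ds$, is the one that contains the short arc $\arc{u_1v_1}$ of $s_1$. Applying the walk-replacement of Observation~\ref{obs:deltas} to any path that uses $s_1$ produces a walk from $p$ to $q$ on $C$ that traverses $\arc{u_1v_1}$; since that arc does not lie on the clockwise side, the cheapest such walk is the monotone counter-clockwise one, of length $\pi+\ds$, and it decomposes as $[p,u_1]\cup\arc{u_1v_1}\cup[v_1,q]$.

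The main step is then to bound the total shortcut saving available in this walk. Any additional shortcut $s_j$ with $j\geq 2$ that is also used must contribute its short arc $\arc{u_jv_j}$ as a disjoint sub-arc contained in one of the flanking segments $[p,u_1]$ or $[v_1,q]$, each of which has length at most $\pi-0.4+\ds/2-\alpha(s_1)/2<3.03$; but $|s_j|>1.999$, established earlier for every $j\geq 2$, forces $\alpha(s_j)>\alpha(1.999)>3.07$, so no such $s_j$ fits. Hence only $s_1$ contributes, the saving is at most $2\delta(\shortvi)$, and the length of any $s_1$-using path is at least $\pi+\ds-2\delta(\shortvi)>\pi-\ds$ because $\delta(\shortvi)<\ds$. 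Non-monotone walks covering $\arc{u_1v_1}$ are strictly longer than $\pi+\ds$, so they do not rescue the combination either. The main obstacle I anticipate is making this reduction to the monotone walk fully rigorous---in particular, ruling out overlapping short arcs or small detours---which I expect to follow from the minimum-number-of-shortcuts convention stated after Observation~\ref{obs:umbra} together with the disjointness of the umbras of $s_2,\dots,s_7$ already established.
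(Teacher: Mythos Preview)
Your treatment of the two isolated points $\theta=\dsii$ and $\theta=\pi-\dsii$ is correct and matches the paper exactly: in each case one of $p,q$ equals~$0$, which lies in $\deep(s_1)$ by construction, so Observation~\ref{obs:deep} applies.

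For the interval $[\pi+0.4,2\pi-0.4]$, however, your unrolling argument has a genuine gap.  The step ``any additional shortcut $s_j$ \ldots\ must contribute its short arc $\arc{u_jv_j}$ as a disjoint sub-arc contained in one of the flanking segments $[p,u_1]$ or $[v_1,q]$'' is not justified.  The walk~$W$ obtained by replacing every shortcut by its short arc is \emph{not} forced to be the monotone counter-clockwise traversal of the long arc; it is whatever walk~$\gamma$ unrolls to, and when $\gamma$ uses a long shortcut~$s_j$ the corresponding sub-walk $\arc{u_jv_j}$ can overlap $\arc{u_1v_1}$ or even extend outside the long arc entirely.  In that case $|W|>\pi+\ds$, but the extra length may be compensated by the saving $2\delta(s_j)$, which can be as large as $2\delta(2)=2\dsvi$; since we only know $\ds>\dsvi$, the inequality $|W|-2\delta(s_1)-2\delta(s_j)>\pi-\ds$ does \emph{not} follow from $|W|\geq\pi+\ds$.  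Your proposed patch via ``disjointness of the umbras of $s_2,\dots,s_7$'' cannot work either: each of those umbras has length $>1.999$, and six of them on a circle of length~$2\pi$ overlap heavily.

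The paper's argument avoids the unrolling altogether.  It first checks numerically that for $\theta\in[\pi+0.4,2\pi-0.4]$ both $p$ and $q$ lie outside the interval $(-0.68,0.68)$, which contains both endpoints of~$s_1$.  If a shortest path~$\gamma$ uses~$s_1$, then (since using two shortcuts of length $>1.999$ already exceeds $\pi-\ds$) $s_1$ must be the first or last shortcut on~$\gamma$.  The subpath consisting of~$s_1$ together with the arc to the adjacent endpoint $p$ or $q$ then has length at least $|s_1|+\bigl(0.68-\alpha(s_1)/2\bigr)=0.68+|s_1|/2-\delta(s_1)>0.68$.  Adding the mandatory long shortcut $|s_j|>1.999$ gives $|\gamma|>0.68+1.999>\pi-\ds$, a contradiction.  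This bound is uniform in the position of~$s_j$, which is exactly what your approach is missing.
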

\begin{proof}
  Recall that the point pair for the configuration~$(\theta, \ds)$
  consists of $p = \theta - \dsii$ and $q = \theta + \pi + \dsii$.  So
  for $\theta = \pi - \dsii$ we have $p = \pi - \ds$ and $q = 0$,
  while for $\theta = \dsii$ we have $p = 0$ and $q = \pi + \ds$.
  Since~$0$ lies in the deep umbra of~$s_1$,
  Observation~\ref{obs:deep} implies that $s_1$ cannot be used.

  Consider now $\theta \in [\pi + 0.4, 2\pi - 0.4]$. We have
  \begin{align*}
    \pi + 0.11 < \pi + 0.4 - \dsii & \leq p \leq
    2\pi - 0.4 - \dsii < 2\pi - 0.68\\
    0.68 < 0.4 + \dsii & \leq q \leq \pi - 0.4 + \dsii < \pi - 0.11
  \end{align*}
  Assume for a contradiction that there is a shortest path~$\gamma$
  from~$p$ to~$q$ that uses~$s_1$.  Since $s_1$ lies on the long arc
  from~$p$ to~$q$ of length~$\pi + \ds$, the path~$\gamma$ must also
  use another shortcut~$s_i$ with $|s_i| > 1.999$. On the other hand,
  $\gamma$ must use~$s_1$ and then go along the circle boundary
  to~$p$ or~$q$.  Since neither~$p$ nor~$q$ lie in the
  interval~$(-0.68, 0.68)$, this subpath of~$\gamma$ has length at least
  \[ |s_1| + 0.68 - |s_1|/2 - \delta(s_1) = 0.68 + |s_1|/2 - \delta(s_1),\]
  which is strictly larger than $0.68$.
  So the entire
  path~$\gamma$ has length at least $1.999 + 0.68 > \pi - \delta(2) > \pi - \ds$,
  a contradiction.
\end{proof}

It follows that the six regions~$\reg(s_2, \ds), \reg(s_3, \ds),
\dots, \reg(s_7,\ds)$ must cover the points on the upper boundary~$\boundary$
with~$\theta$ in the set above. Since the shortcuts have length at
least~$1.999$ and $\ds > \delta(2)$, each region covers a single
interval on~$\boundary$ of length at most~$\pi - 1.999 - \ds < 0.572$.

The interval $[\pi + 0.4, 2\pi - 0.4]$ has length~$\pi - 2 \times 0.4
> 4 \times 0.572$, and therefore requires five regions to be covered.
The distance between the isolated point~$\pi - \dsii$ and the interval
is~$0.4 + \dsii > 0.68$, and the same holds for the distance
between~$\dsii$ and the interval. Both isolated points thus require a
region that covers them and that cannot contribute to the coverage of
the interval. It follows that we need seven regions to cover this
subset of~$\boundary$, a contradiction.


\subsubsection{No short shortcut}
\label{sssec:no-short}

We now assume that $|s_1| > \shortvi$, that is $|S'| = 7$.  Since
$|s_3| \geq \longvi$ and $|s_1|+|s_3| > \shortvi + \longvi = \pi - \dsvi = \pi -
\delta(2) > \pi - \ds$, the only possible combination of shortcuts
that can be used is the combination of $s_1$ and~$s_2$.

\begin{lemma}
  \label{lem:bound-combining-edges}
  $\delta(s_1) + \delta(s_2) < 0.2$
\end{lemma}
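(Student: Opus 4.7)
I would prove this by contradiction: suppose $\delta(s_1)+\delta(s_2) \geq 0.2$. First, observe that the only possible combination of shortcuts is $s_1+s_2$, whose use requires $|s_1|+|s_2|\leq \pi-\ds$. Combined with $|s_1|>\shortvi$ and the identity $\shortvi+\longvi=\pi-\delta(2)>\pi-\ds$, this forces $|s_2|<\longvi$ whenever the combination is actually used (otherwise the configuration behaves as seven individual shortcuts, which a direct strip-coverage argument in the style of the six-shortcut proof rules out). Thus $|s_1|,|s_2|\in(\shortvi,\longvi)$, and since $\delta(s_1)\leq\delta(s_2)$ we obtain $\delta(s_2)\geq 0.1$, hence $|s_2|\geq \delta^{-1}(0.1)\approx 1.51$.

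The main step is to apply Lemma~\ref{lem:s2dht} to the six-shortcut set $\{s_2,s_3,\ldots,s_7\}$. Removing $s_1$ from $S$: since $s_1$ appears only alone or in the combination $s_1+s_2$, replacing any use of $s_1$ by walking the corresponding arc costs at most $2\delta(s_1)$ (by Observation~\ref{obs:deltas}), so $\diam(\{s_2,\ldots,s_7\})\leq \pi-\ds+2\delta(s_1)$. If the two hypotheses of Lemma~\ref{lem:s2dht}, namely $|s_2|>1.7$ and $\diam(\{s_2,\ldots,s_7\})\leq\pi-0.54$, can be verified, the lemma yields $|s_2|>1.999$. Combined with $|s_1|+|s_2|\leq\pi-\ds\leq\pi-\delta(2)\approx 2.571$, this forces $|s_1|<0.572<\shortvi$, contradicting $|s_1|>\shortvi$.

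The main obstacle is verifying the two hypotheses from the mild bound $|s_2|\geq 1.51$ directly granted by the assumption. For $|s_2|>1.7$ I would use a refined coverage argument on the middle line $\midline$ of $\strip(\ds)$, in the style of Lemma~\ref{lem:s2longvi}: by Observation~\ref{obs:deltas} the combination $s_1+s_2$ cannot help any antipodal pair whenever $\delta(s_1)+\delta(s_2)<\ds/2$, so $\midline$ must be covered by the individual regions $\reg(s_i,\ds)$; Corollary~\ref{coro:region} identifies which shortcuts contribute, and a counting argument using $|s_1|,|s_2|<\longvi$ should force $|s_2|>1.7$ (the complementary subcase $\delta(s_1)+\delta(s_2)\geq\ds/2$ directly yields $\delta(s_2)\geq\ds/4$, hence $|s_2|\geq 1.65$, with a small additional push needed). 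For the second hypothesis, $\diam(\{s_2,\ldots,s_7\})\leq\pi-0.54$ requires $\delta(s_1)\leq(\ds-0.54)/2\approx 0.016$; I would case-split on $|s_1|$, noting that in the regime where $|s_1|$ is large the combination bound $|s_1|+|s_2|\leq\pi-\ds$ together with $|s_2|>1.7$ already produces a separate contradiction. The required numerical verifications use the explicit values of $\shortvi$, $\longvi$, and $\delta(2)$ throughout, mirroring the style of the short six-shortcut proof.
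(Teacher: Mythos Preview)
Your plan and the paper's proof share the two main ingredients: the combination bound $|s_1|+|s_2|\leq \pi-\ds$ and an eventual appeal to Lemma~\ref{lem:s2dht}. The difference is structural, and your structure leaves a real gap.

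You try to \emph{derive} the hypotheses of Lemma~\ref{lem:s2dht} (namely $|s_2|>1.7$ and $\diam(\{s_2,\dots,s_7\})\leq\pi-0.54$) from the contradiction assumption $\delta(s_1)+\delta(s_2)\geq 0.2$. But your middle-line argument cannot produce information about~$|s_2|$: once you know that $s_1,s_2$ (with $\delta(s_i)<\ds/2$) do not reach~$\midline$, the coverage constraint only bounds $|s_3|,\dots,|s_7|$. The ``counting argument using $|s_1|,|s_2|<\longvi$'' you invoke does not yield $|s_2|>1.7$; it yields at best that five of $s_3,\dots,s_7$ exceed~$\muvi$. Your fallback subcase $\delta(s_1)+\delta(s_2)\geq\ds/2$ only gives $|s_2|\gtrsim 1.65$, and the ``small additional push'' is never supplied. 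Similarly, the diameter hypothesis requires $\delta(s_1)\lesssim 0.016$, and your plan to extract this is circular: you rely on $|s_2|>1.7$, which is precisely what is unestablished. The ``no combination'' case is also too vague; it is not ruled out by the six-shortcut argument alone.

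The paper avoids all of this by reversing the logic: instead of proving $|s_2|>1.7$, it \emph{case-splits} on whether $|s_2|>1.7$ and whether $|s_1|\geq 0.83$. In three of the four resulting subcases, convexity or monotonicity of~$\delta$ together with $|s_1|+|s_2|\leq\pi-\ds$ (or the upper-boundary bound $|s_1|+|s_2|<2.34$ in the no-combination case) gives $\delta(s_1)+\delta(s_2)<0.2$ directly, with no need for Lemma~\ref{lem:s2dht}. Only the single subcase $|s_1|<0.83$, $|s_2|>1.7$ invokes Lemma~\ref{lem:s2dht}, and there both hypotheses are immediate from the case assumptions (since $2\delta(0.83)<0.03$). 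The lemma then gives $|s_2|>1.999$, which together with $|s_1|>\shortvi$ makes the combination impossible, reducing to the first case. The key idea you are missing is this direct use of the convexity of~$\delta$ under the sum constraint: it disposes of most of the parameter range outright, so that Lemma~\ref{lem:s2dht} is needed only where its hypotheses come for free.
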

\begin{proof}
  We first assume that the combination of shortcuts $s_1$ and $s_2$ is
  never used. Then the seven regions~$\reg(s_i, \ds)$ for $i = 1, 2,
  \ldots, 7$ cover the upper boundary~$\boundary$ of $\strip(\ds)$.
  By Corollary~\ref{coro:region} we have
  \[
  \sum_{i=1}^{7} (\pi - |s_i| - \ds) \geq 2\pi
  \quad \text{or, equivalently,} \quad
  |s_1| + |s_2| \leq 5\pi - 7\ds - \sum_{i=3}^{7} |s_i|
  \]
  and using $|s_3| \geq \longvi$ gives
  \[
  |s_1| + |s_2| \leq 5\pi - 7\ds - 5\longvi < 2.34.
  \]
  Since $|s_1| \geq \shortvi$, convexity of $\delta$ gives us
  $\delta(s_1) + \delta(s_2) \leq \delta(\shortvi) + \delta(2.34 -
  \shortvi) \approx 0.15 < 0.2$, proving the claim.

  It remains to consider the case where for some pair of points the
  combination of $s_1$ and $s_2$ needs to be used (and one of the two
  shortcuts alone does not suffice).  This implies that $|s_1| + |s_2|
  \leq \pi-\ds < \pi - \delta(2) = \pii + 1$.

  If $|s_1| \geq 0.83$, then convexity of $\delta$ implies
  $\delta(s_1) + \delta(s_2) \leq \delta(0.83) + \delta(\pii + 1 -
  0.83) \approx 0.1986 < 0.2$.

  If $|s_1| < 0.83$ and $|s_2| \leq 1.7$, then $\delta(s_1) +
  \delta(s_2) \leq \delta(0.83) + \delta(1.7) \approx 0.1789 < 0.2$.

  Finally, if $|s_1| < 0.83$ and $|s_2| > 1.7$, then we observe that
  $\diam(\{s_2, \dots, s_7\}) \leq \pi - \ds + 2\delta(s_1) < \pi - \ds +
  2\delta(0.83) < \pi - 0.54$.  We can thus apply
  Lemma~\ref{lem:s2dht} and find that $|s_2| > 1.999$. Since $\shortvi
  + 1.999 > \pi - \delta(2) > \pi - \ds$, no combination of $s_1$ and $s_2$ is
  possible and we are back in the first case.
\end{proof}

Let $\zeta = \delta(2) - 0.4 = \pii - 1.4 \approx 0.1708$, and
consider the configurations~$(\theta, \zeta)$ in $\strip(\ds)$.  Let
$(p, q)$ be a pair of points on $C$ corresponding to some~$(\theta,
\zeta)$ in $\strip(\ds)$.  The shorter arc $\arc{pq}$ along $C$ has
length~$\pi - \zeta = \pi - (\pii - 1.4) = \pii + 1.4$.  Thus, any
path from $p$ to $q$ using~$s_1$, $s_2$, or their combination has
length at least $\pi-\zeta-2\delta(s_1)-2\delta(s_2) > \pii + 1$ since
$\delta(s_1) + \delta(s_2) < 0.2$ according to
Lemma~\ref{lem:bound-combining-edges}.  Since $\pi-\delta^* < \pi -
\delta(2) = \pii + 1$, it follows that the pairs corresponding
to~$(\theta, \zeta)$ cannot make any use of~$s_1$ and~$s_2$.  That is,
they can only be covered by the five regions~$\reg(s_3, \ds),\ldots,
\reg(s_7, \ds)$.  Since $|s_3| \geq \longvi$, these regions have width
at most $\pi - \longvi -\ds < 0.696$.  For a region $\reg(s, \ds)$ to
cover \emph{two} pieces of $\xi = \zeta$ in $\strip(\ds)$, we need
$2\delta(s) \geq \ds + \zeta$ by Lemma~\ref{lem:region}, which implies
that $|s| > 1.949$.  This in turn means that the width of the region
is at most $\pi - 1.949 - \ds < 0.622$.  If follows that the five
regions~$\reg(s_3, \ds),\ldots, \reg(s_7, \ds)$ can cover at most $5
\times 2 \times 0.622 < 2\pi$ of the line $\xi = \zeta$
in~$\strip(\ds)$, a contradiction.

This concludes the proof of Theorem~\ref{thm:6to7} for seven shortcuts.


\section{Eight shortcuts}
\label{sec:eight}

With eight shortcuts we can improve on the diameter, obtaining
$\diam(8) < \diam(7) = \diam(6)$.
Our construction $S$ consists of six long shortcuts with length
$a_1 \approx 1.999870869$ and two short ones with
length $a_2 \approx 0.988571799$,
placed as in \figurename~\ref{fig:ex_8}(left),
and achieves the diameter $\diam(S) \approx \pi - 0.5822245291
= 2.559368125 < \diam(6)$.

We obtained this construction by maximizing $\ds$ with constraints
$\pi - a_1 - \ds \geq \pi / 6$, $\pi - a_2 - \ds \geq \pi/2$, and
$\delta(a_1) + \delta(a_2) \geq \ds$.  We can thus cover $\strip(\ds)$
as seen in the diagram in \figurename~\ref{fig:ex_8}(right).  In
particular, we have $\pi - a_2 - \ds = \pi/2$ and $\delta(a_1) +
\delta(a_2) = \ds$, while we have a strict inequality $\pi - a_1 - \ds
> \pi / 6$ in our construction.  So, in the strip~$\strip(\ds)$, the
regions slightly overlap.
\begin{figure}[thb]
  \centerline{\includegraphics{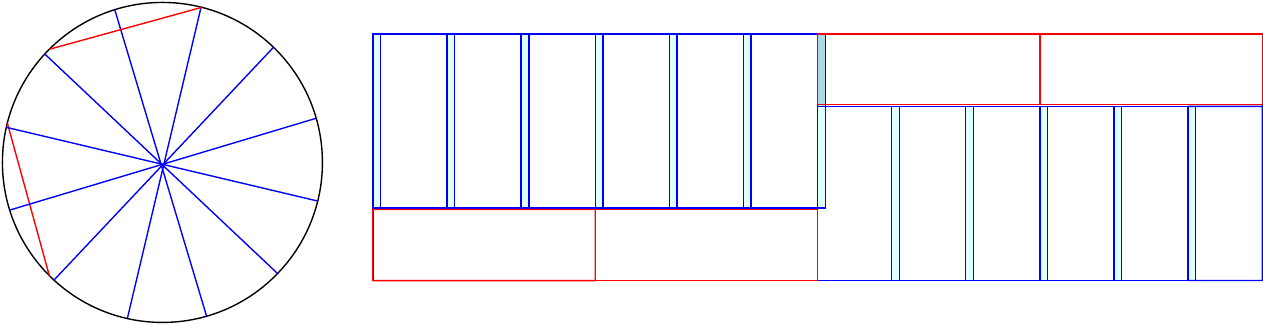}}
  \caption{A shortcut configuration $S$ of $8$ shortcuts with $\diam(S) <
    \diam(6)$, and the corresponding regions in the strip~$\strip(\ds)$.}
  \label{fig:ex_8}
\end{figure}


\section{An asymptotically tight bound}

In this final section, we show that $\diam(k) = 2 +
\Theta(1/k^{\nicefrac{2}{3}})$ as $k$ goes to infinity.

\begin{theorem}
  To achieve diameter at most~$2 + \nicefrac 1m$,
  $\Theta(m^{\nicefrac{3}{2}})$ shortcuts are both necessary and
  sufficient.
\end{theorem}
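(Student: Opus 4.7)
Set $\ds := \pi - 2 - 1/m$ so that the target diameter $\pi - \ds = 2 + 1/m$ corresponds to the strip $\strip(\ds)$; for $m$ sufficiently large, $\ds > \delta(2) = \pii - 1$, placing us outside the regimes analyzed in Sections~\ref{sec:up2five}--\ref{sec:eight}. I introduce a level parameter $\eta := \pii - 1 - \delta(a) \in [0,\pii - 1]$: a shortcut \emph{of level $\eta$} has length $a(\eta) \approx 2 - \eta^2$ for small $\eta$, and by Lemma~\ref{lem:region} (Case~3 when $\eta$ is small) its region $\reg(s,\ds)$ consists of two rectangles of width $\pi - a(\eta) - \ds \approx 1/m + \eta^2$, centered in $\xi$ at $\pm 2\eta$ with comparable height.

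For the upper bound, I would tile $[0,\ds]$ with bands $B_i$ at levels $\eta_i$ so that consecutive bands have widths $\Theta(1/m + \eta_i^2)$ in $\xi$. At each band, $\Theta(1/(1/m + \eta_i^2))$ uniformly spaced shortcuts of length $a(\eta_i)$ cover the full $\theta$-range at every $\xi$ in the band. Summing: the $\Theta(\sqrt m)$ bands with $\eta_i \leq 1/\sqrt m$ each need $\Theta(m)$ shortcuts, contributing $\Theta(m^{3/2})$; the remaining bands contribute $\int d\eta/(\eta^2 + 1/m)^2 = \Theta(m^{3/2})$. Hence $O(m^{3/2})$ shortcuts suffice.

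For the lower bound, let $S$ achieve $\diam(S) \leq 2 + 1/m$. Fix $\eta$ and consider the line $\mathcal{L}_{2\eta} \subset \strip(\ds)$; every point on $\mathcal{L}_{2\eta}$ must be covered by some entity (a single shortcut or a combination). Observation~\ref{obs:deltas} forces the entity's shortcuts $s_1,\ldots,s_\ell$ to satisfy $\sum_j \delta(s_j) \geq (\ds - 2\eta)/2 = \pii - 1 - \eta - 1/(2m)$, while the path-length constraint gives $\sum_j |s_j| \leq \pi - \ds = 2 + 1/m$. Convexity of $\delta$ combined with these two constraints forces each entity to be dominated by a single ``near-optimal'' shortcut $s_j$ of level $\leq \eta + O(\eta^2 + 1/m)$, while the other shortcuts in the combination (if any) must have total length $O(1/m)$ and hence negligible $\delta$. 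By (an extension of) Corollary~\ref{coro:region}, each such entity covers at most $2(1/m + \eta^2)$ of $\mathcal{L}_{2\eta}$ in $\theta$, so covering the full length $2\pi$ requires $\Omega(1/(1/m + \eta^2))$ near-optimal shortcuts in $S$ of level close to $\eta$. Since each near-optimal shortcut is specialized to a specific level, it is charged to a single band; summing over the $\Theta(\sqrt m)$ bands (the same partition used in the upper bound) yields $|S| = \Omega(m^{3/2})$.

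The main obstacle is the bookkeeping for combinations: making rigorous the claim that each covering entity has a single dominant shortcut, that its $\theta$-footprint on $\mathcal{L}_{2\eta}$ is no larger than that of a comparable single shortcut, and that dominant shortcuts cannot be shared across many bands. All three rest on convexity of $\delta$ together with the tight constraint $\sum_j |s_j| \leq \pi - \ds$, which prevents genuine multi-shortcut combinations (multiple large shortcuts cannot coexist in a useful combination because their total length would exceed $\pi - \ds$), and it is this observation that promotes the trivial area bound $\Omega(1)$ into the sharp $\Omega(m^{3/2})$.
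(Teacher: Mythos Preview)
Your upper bound is essentially the paper's construction rephrased in the strip language: the paper also uses shortcuts of a range of lengths, spaced along the circle at intervals proportional to the width of their regions, and sums to $O(m^{3/2})$ by the same integral.

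Your lower bound, however, takes a harder route than the paper, and the gaps you flag are real. The paper bypasses combinations and all the $\delta$-bookkeeping with one clean observation: if no shortcut has one endpoint in a small arc $I_p\ni p$ and the other in a small arc $I_q\ni q$, then any path from $p$ to $q$ must touch the circle at some point $r\notin I_p\cup I_q$, and hence has length at least the \emph{Euclidean} sum $|\seg{pr}|+|\seg{rq}|$, irrespective of how many shortcuts it uses. Taking $|I_p|=|I_q|=4/m$ and $p,q$ at arc distance $\pi-t/m$ with $0\le t\le\sqrt{m}-2$, a short trigonometric estimate gives $|\seg{pr}|+|\seg{rq}|>2+1/m$ for every admissible~$r$. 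Thus each such pair of arcs must be joined by at least one shortcut; partitioning $C$ into $\Theta(m)$ arcs of length $\ge 6/m$ yields $\Theta(m^{3/2})$ near-antipodal pairs and the bound follows immediately.

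So the paper never needs a dominant-shortcut lemma, never bounds the $\theta$-footprint of a combination, and never charges shortcuts to bands. If you insist on pushing your approach through, the charging is the delicate step: a shortcut at level~$\eta$ has a region of $\xi$-height $\Theta(1/m+\eta^2)$, comparable to your band width, so each long shortcut serves $O(1)$ bands, which is fine; but you also need to rule out that a few short auxiliary shortcuts, reused across many bands, combine with different long shortcuts to enlarge footprints beyond the single-shortcut width. The Euclidean-distance trick makes all of that unnecessary.
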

\begin{proof}
  We prove the necessary condition first.  Consider two points~$p, q$
  that form an angle of~$\pi - \nicefrac tm$, for some integer~$0 \leq
  t \leq \sqrt{m} - 2$.  Consider two intervals $I_{p}$ and $I_{q}$,
  both of arc length~$\nicefrac 4m$, with the midpoint of the
  intervals at~$p$ and~$q$, respectively. We claim that if there is no
  shortcut connecting a point of~$I_{p}$ with a point of~$I_{q}$, then
  the distance between~$p$ and~$q$ is larger than~$2 + \nicefrac 1m$.

  If there is no such shortcut, then the shortest path from~$p$ to~$q$
  must visit a point~$r$ on the circle not in either interval, see
  Figure~\ref{fig:asymptotic}(left).  The sum~$|pr| + |rq|$ is
  minimized when~$r$ is the point making angle~$\nicefrac 2m$
  with~$q$, so we have $\alpha(pr) = \pi - \nicefrac{(t+2)}{m}$ and
  $\alpha(rq) = \nicefrac 2m$.

  This gives us
  \begin{align*}
    |qr| &= 2\sin\frac{2}{2m} = 2 \sin\frac{1}{m} \geq
    \frac{2}{m} - \frac{2}{3!}\frac{1}{m^{3}} >
    \frac{2}{m} - \frac{1}{3m} = \frac{5}{3m}, \\
    |pr| &= 2\sin(\frac{\pi}{2} - \frac{t+2}{2m})
    = 2\cos\frac{t + 2}{2m} \geq 2\cos\frac{\sqrt{m}}{2m}
    = 2 \cos\frac{1}{2\sqrt{m}}
    \geq 2 - \frac{1}{4m},
  \end{align*}
  and so $|pr|+|rq| > 2 + \nicefrac 1m$.

  We now subdivide~$C$ into $\Theta(m)$ intervals of length at
  least~$\nicefrac 6m$.  Consider a pair of intervals~$I, J$ at arc
  distance at least $\pi - 1/\sqrt{m}$.  Then there are points~$p\in
  I$ and~$q\in J$ with $I_{p} \subset I$ and $I_{q} \subset J$ and $p,
  q$ forming an angle of the form~$\pi - \nicefrac tm$ for an integer
  $0 \leq t \leq \sqrt{m} - 2$.  It follows that there must be some
  shortcut connecting $I$ and~$J$.  Since there are
  $\Theta(m^{\nicefrac 32})$ such pairs of intervals, we must have at
  least $\Omega(m^{\nicefrac 32})$ shortcuts.
\begin{figure}[thb]
  \centerline{\includegraphics[width=.9\textwidth]{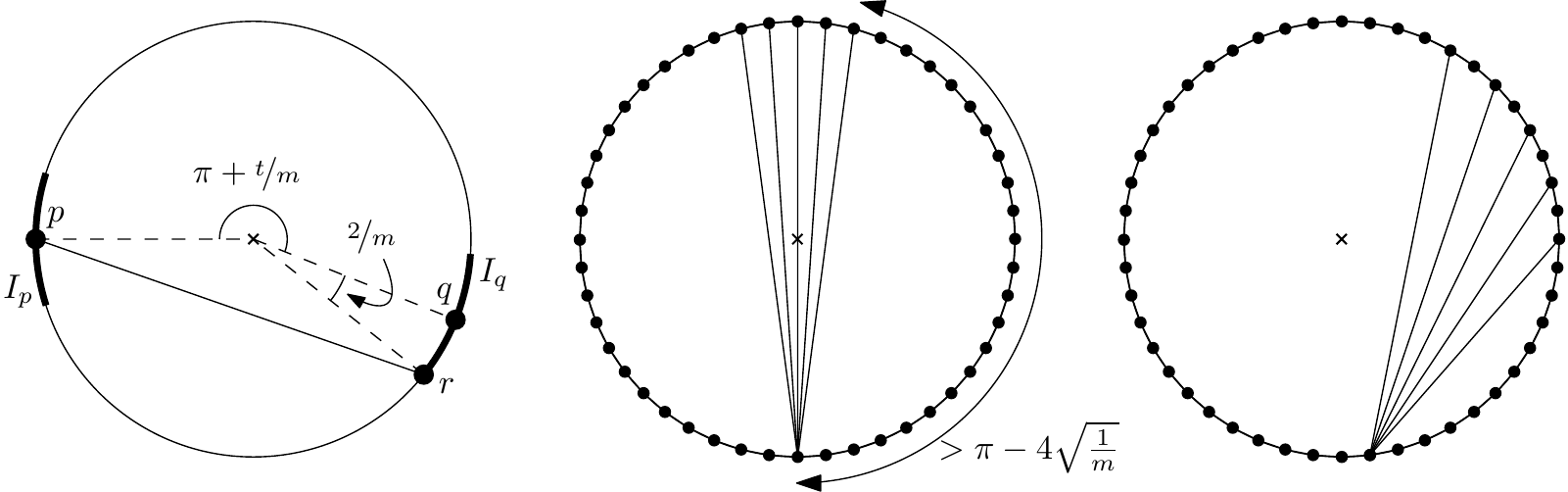}}
  \caption{(left) If there is no shortcut between $I_p$ and $I_q$ then
    the shortest path from~$p$ to~$q$ must visit a point~$r$ on the
    circle not in either interval. (center) Shortcut between every
    pair that makes an angle larger than $\pi - 4\sqrt{\nicefrac 1m}$.
    (right) Adding shortcuts of arc length~$\pi - \nicefrac tm$.  }
  \label{fig:asymptotic}
\end{figure}

  \medskip
  We now turn to the sufficient condition, and construct a set
  of~$\Theta(m^{\nicefrac 32})$ shortcuts that give a diameter
  of~$2+\nicefrac 1m$.

  We start by placing $4\pi m$ points uniformly around the circle, and
  connect each pair that makes an angle larger than $\pi -
  4\sqrt{\nicefrac 1m}$, as shown in
  Figure~\ref{fig:asymptotic}(center).  This creates
  $\Theta(m^{\nicefrac 32})$ shortcuts and ensures that for points $p,
  q$ with angle larger than $\pi - 4\sqrt{\nicefrac 1m}$ the distance
  between~$p$ and~$q$ is bounded by~$2 + \nicefrac 1m$.

  It remains to add shortcuts to decrease the distance of point
  pairs~$p, q$ that form an arc between~$2$ and~$\pi -
  4\sqrt{\nicefrac 1m}$.  For each integer $t$ with $4\sqrt{m} < t <
  2m$ we will create a set of shortcuts of arc length~$\pi - \nicefrac
  tm$, see Figure~\ref{fig:asymptotic}(right).  These shortcuts will
  be used for pairs~$p, q$ forming an arc between $\pi - \nicefrac tm$
  and $\pi - \nicefrac{(t-1)}{m}$.

  Let us fix such a~$t$, and consider a shortcut~$s$ of arc
  length~$\pi - \nicefrac tm$.  Then the length of the shortcut is
  \[
  |s| = 2\sin \frac{\pi - \nicefrac tm}{2} = 2 \cos\frac{t}{2m}.
  \]
  Using the bound $\cos x \leq 1 - \tfrac{x^{2}}{2} +
  \tfrac{x^{4}}{24} \leq 1 - (\tfrac 12 - \tfrac{1}{24})x^{2} = 1 -
  \tfrac{11}{24}x^{2}$ for $x < 1$, we have
  \[
  |s| \leq 2 - 2 \frac{11}{24} \frac{t^{2}}{4m^{2}}
  = 2 - \frac{11}{48}\frac{t^{2}}{m^{2}}
  < 2 - \frac{1}{6}\frac{t^{2}}{m^{2}} = 2 - 2\Delta,
  \]
  where we define $\Delta = \tfrac{1}{12} (\tfrac{t}{m})^{2}$.  Since
  $t > 4\sqrt{m}$ we have $\Delta > \tfrac{16}{12} \tfrac{1}{m} >
  \tfrac{1}{m}$.

  We repeat shortcuts of this length every arc interval of
  length~$\Delta$.  Consider now a pair of points~$p, q$ forming an
  angle in the interval~$\pi - \nicefrac tm$ to $\pi -
  \nicefrac{(t-1)}{m}$.  We can go from $p$ to~$q$ by first going to
  the nearest shortcut along an arc of length at most~$\Delta$, then
  following the shortcut of length at most~$2 - 2\Delta$, and finally
  going backwards by at most~$\Delta$, or forward by at
  most~$\nicefrac 1m < \Delta$. It follows that the distance
  between~$p$ and~$q$ is at most $2 - 2\Delta + 2\Delta = 2$.

  The number of shortcuts of length~$\pi - \nicefrac tm$ is
  $\nicefrac{2\pi}{\Delta}$, and so the total number of shortcuts of
  this type is
  \[
  \sum_{t = 4\sqrt{m} + 1}^{2m}24\pi\frac{m^{2}}{t^{2}} =
  24\pi m^{2}\sum_{t = 4\sqrt{m} + 1}^{2m} \frac{1}{t^{2}}
  \leq 24\pi m^{2} \int_{4\sqrt{m}}^{\infty} \frac{1}{x^{2}}\,dx
  = 6\pi m^{\nicefrac 32}.
  \]
  This completes the proof.
\end{proof}

\section{Conclusions}
We have given exact bounds on the diameter for up to seven shortcuts.
In all cases, the shortcuts are of equal length.  For $k=8$, however,
our upper bound construction uses shortcuts of two different lengths.
On the other hand,
it is not difficult to see that eight shortcuts of equal length
cannot even achieve a slightly better diameter than $\diam(6)$.
 In general, what is the diameter achievable with $k$
shortcuts of equal length?

We have shown that for $k = 0$ and $k = 6$ we have $\diam(k) =
\diam(k+1)$.  Are there any other values of $k$ for which this holds?

Finally, in all our constructions, including the one for large~$k$, we
never use combinations of shortcuts:  the shortest path for any pair of points
 uses at most one shortcut.  Is it true that combinations of
shortcuts never help, for any~$k$?   Meanwhile, one could
 make it a requirement and ask:  What is the best diameter
 achievable with $k$ shortcuts, under the restriction that no path can
use more than one shortcut?  This problem then reduces to covering the
strip $\strip(\ds)$ by regions $\reg(s, \ds)$, and may be more tractable
than the general form.


\newpage 
\appendix
\section{Appendix: Calculations}
\label{sec:calculations}
\begin{verbatim}
Source code at: http://github.com/otfried/circle-shortcuts

Lemma 4:
=========

delta(delta(2)) = 0.00402

Table 1:
========

k  & a*     & d*     & pi - d*& mu
2  & 1.4782 & 0.0926 & 3.0490 & 1.2219 \\
3  & 1.8435 & 0.2509 & 2.8907 & 1.5943 \\
4  & 1.9619 & 0.3943 & 2.7473 & 1.7623 \\
5  & 1.9969 & 0.5164 & 2.6252 & 1.8526 \\
6  & 2.0000 & 0.5708 & 2.5708 & 1.8828 \\

Lemma 10 for k = 3:
===================

a* = 1.8435, d* = 0.2509, mu = 1.5943
(i) (pi - d*)/2 = 1.4454
(v) delta(1.45) = 0.0860
(vi) delta(pi/2) = 0.1179
(vii) a such that delta(a) = 0.06 = 1.3150

Table 2 (in Lemma 11):
======================

k & a*     & d*     & mu_k   & sigma_k & lambda_k
4 & 1.9619 & 0.3943 & 1.7623 & 1.0373  & 1.7100 \\
5 & 1.9969 & 0.5164 & 1.8526 & 0.7862  & 1.8390 \\
6 & 2.0000 & 0.5708 & 1.8828 & 0.6957  & 1.8751 \\

Lemma 10 for k in {4, 5, 6}:
============================

Showing that l = 1:
k = 4:
  d* = 0.3943, sigma = 1.0373, lambda = 1.7100
  delta^ = 0.3411,  w = (pi - lambda - d^) = 1.0906
  (k-2) w = 2.1811 < pi
k = 5:
  d* = 0.5164, sigma = 0.7862, lambda = 1.8390
  delta^ = 0.4728,  w = (pi - lambda - d^) = 0.8298
  (k-2) w = 2.4894 < pi
k = 6:
  d* = 0.5708, sigma = 0.6957, lambda = 1.8751
  delta^ = 0.5262,  w = (pi - lambda - d^) = 0.7403
  (k-2) w = 2.9610 < pi

The final contradiction of Lemma 10:
k = 4:
  delta^ = 0.3411, a^ = 1.9304, w = pi - a^ - delta^ = 0.8701
  (k-1) w = 2.6103 < pi
k = 5:
  delta^ = 0.4728, a^ = 1.9893, w = pi - a^ - delta^ = 0.6795
  (k-1) w = 2.7178 < pi
k = 6:
  delta^ = 0.5262, a^ = 1.9979, w = pi - a^ - delta^ = 0.6174
  (k-1) w = 3.0872 < pi

Lemma 12:
=========

k=3: mu=1.5943, w = pi - mu - d* = 1.2964 => (k-1)w = 2.5928 < pi
k=4: mu=1.7623, w = pi - mu - d* = 0.9850 => (k-1)w = 2.9549 < pi
k=5: mu=1.8526, w = pi - mu - d* = 0.7726 => (k-1)w = 3.0902 < pi

Theorem 14 for k = 6:
=====================

mu=1.8828, d*=0.5708, w = pi - mu - d* = 0.6880
 4 w = 2.7518 < pi
 pi - d* + 5 w = 6.0106 < 2pi
 delta(pi - d* - mu) = 0.0072
 d^ = 0.5548, a^ = 1.9997
 5 (pi - a^ - d^) = 2.9353 < pi

Lemma 16:
=========

d* = 0.5708, sigma6 = 0.6957, delta(sigma6) = 0.0074
d^(4) = 0.5114, 2 delta(1.849) = 0.5104
  w = pi - 1.849 - d^(4) = 0.7812
  4 * w = 3.1248 < pi
d^(2) = 0.5411 > 0.5164 = d5*
d^(1) = 0.5559
  w = pi - lambda6 - d^(1) = 0.7105
  4 * w = 2.8422 < pi

Lemma 17:
=========

1.7 + lambda6 = 3.5751 > pi
delta(1.999) = 0.5397 < 0.54
  w1 = pi - 1.999 - 0.54 = 0.6026
  w2 = pi - lambda6 - 0.54 = 0.7265
  w3 = pi - 1.7 - 0.54 = 0.9016
  w3 + 2 * w2 + 6 * w1 = 5.9701 < 2pi
  2 * w2 + 8 * w1 = 6.2737 < 2pi

Lemma 18:
=========

d^ = d* - 2 * delta(sigma6) = 0.5559
s3 <= 0.8 * pi - d^ = 1.9573
A(lambda6, d^) = 4 * delta(lambda6) * (pi - lambda6 - d^) = 0.7900
A(s3, d^) <= 4 * delta(1.9573) * (pi - 1.9573 - d^) = 0.9681
delta(1.999) = 0.53967 < d^
A(1.999, d^) < 4 * d^ * (pi - 1.999 - d^) = 1.3046
0.7900 + 0.9681 + 4 * 1.3046 = 6.9765 < 6.9862 = 4 * d^ * pi

Lemma 19:
=========

0.4 - d*/2 = 0.1146, 0.4 + d*/2 = 0.6854

Final contradiction of Section 4.2.1 (a short shortcut exists):
===============================================================

pi - 1.999 - d* = 0.5718
pi - 2 * 0.4 = 2.3416 > 2.2872 = 4 * 0.5718
0.4 + d*/2 = 0.6854 > 0.5718

Lemma 20:
=========

s1 + s2 <= 5 * pi - 7 * ds - 5 * lambda6 = 2.3369 < 2.34
delta(sigma6) + delta(2.34 - sigma6) = 0.1505 < 0.2
delta(0.83) + delta(pi/2 + 1 - 0.83) = 0.1986 < 0.2
delta(0.83) + delta(1.7) = 0.1789 < 0.2
1.999 + sigma6 = 2.6947 > 2.5708 = pi - d*

Final contradiction of Section 4.2.2 (no short shortcut):
=========================================================

zeta = pi/2 - 1.4 = 0.1708
pi - lambda6 - d* = 0.6957
2 * delta(1.949) = 0.7400 < 0.7416 = d* + zeta
pi - 1.949 - d* = 0.6218
 5 * 2 * 0.622 = 6.2200 < 2pi
\end{verbatim}

\end{document}